\newtheorem{theorem}{Theorem}
\newtheorem{lemma}[theorem]{Lemma}
\newtheorem{corollary}{Corollary}
\newtheorem{observation}{Observation}
\newtheorem{proposition}[theorem]{Proposition}
\newtheorem{problem}{Problem}
\newcommand{\QED}{$\Box$}
\newcommand{\cp}{\mathbin{\Box}}
\newcommand{\cH}{\mathcal{H}}
\newcommand{\cT}{\mathcal{T}}
\newcommand{\cN}{{\cal N}}
\newcommand{\cub}{{\rm cubic}}
\newcommand{\spq}{{\sigma_{(p,q)}}}
\newcommand{\proof}{\noindent\textbf{Proof. }}
\newcommand{\1}{ \vspace{0.1cm} }
\let\oldenumerate\enumerate
\renewcommand{\enumerate}{
  \oldenumerate
  \setlength{\itemsep}{0pt}
  \setlength{\parskip}{0pt}
  \setlength{\parsep}{0pt}
}
\begin{document}


\title{Spreading in claw-free cubic graphs}

\author{Bo\v{s}tjan Bre\v{s}ar$^{a,b}$, Jaka Hed\v zet$^{a,b}$, \, and \,  Michael A. Henning$^{c}$ \\ \\
$^a$ Faculty of Natural Sciences and Mathematics \\
University of Maribor, Slovenia \\
$^b$ Institute of Mathematics, Physics and Mechanics, Ljubljana, Slovenia\\
\small \tt Email: bostjan.bresar@um.si \\
\small \tt Email: jaka.hedzet@imfm.com \\
\\
$^{c}$ Department of Mathematics and Applied Mathematics \\
University of Johannesburg, South Africa\\
\small \tt Email: mahenning@uj.ac.za
}

\date{}
\maketitle

\begin{abstract}
Let $p \in \mathbb{N}$ and $q \in \mathbb{N} \cup \lbrace \infty \rbrace$. We study a dynamic coloring of the vertices of a graph $G$ that starts with an initial subset $S$ of blue vertices, with all remaining vertices colored white. If a white vertex~$v$ has at least~$p$ blue neighbors and at least one of these blue neighbors of~$v$ has at most~$q$ white neighbors, then by the spreading color change rule the vertex~$v$ is recolored blue. The initial set $S$ of blue vertices is a $(p,q)$-spreading set for $G$ if by repeatedly applying the spreading color change rule all the vertices of $G$ are eventually colored blue. The $(p,q)$-spreading set is a generalization of the well-studied concepts of $k$-forcing and $r$-percolating sets in graphs. For $q \ge 2$, a $(1,q)$-spreading set is exactly a $q$-forcing set, and the $(1,1)$-spreading set is a $1$-forcing set (also called a zero forcing set), while for $q = \infty$, a $(p,\infty)$-spreading set is exactly a $p$-percolating set. The $(p,q)$-spreading number, $\spq(G)$, of $G$ is the minimum cardinality of a $(p,q)$-spreading set. In this paper, we study $(p,q)$-spreading in claw-free cubic graphs.  While the zero-forcing number of claw-free cubic graphs was studied earlier, for each pair of values $p$ and $q$ that are not both $1$ we either determine the $(p,q)$-spreading number of a claw-free cubic graph $G$ or show that $\sigma_{(p,q)}(G)$ attains one of two possible values. 
\end{abstract}

{\small \textbf{Keywords:}  Bootstrap percolation; zero forcing set; $k$-forcing set; spreading} \\
\indent {\small \textbf{AMS subject classification:} 05C35, 05C75}

\section{Introduction}

In this paper we continue the study of dynamic graph colorings and explore the concept of $(p,q)$-spreading in claw-free cubic graph. The concept of spreading in graphs is a generalization of the well-studied concepts of $k$-forcing and $r$-percolating sets in graphs.

Consider a dynamic coloring of the vertices of a graph $G$ that starts with an initial subset $S$ of blue vertices, with all remaining vertices colored white. For $k \in \mathbb{N}$, the color change rule in $k$-forcing is defined as follows: \emph{if a blue vertex $u$ has at most~$k$ white neighbors, then all white neighbors of $u$ are recolored blue}. In particular, when $k=1$, this is the color change rule for zero forcing in graphs. The initial set $S$ of blue vertices is a $k$-forcing set for $G$ if by repeatedly applying the color change rule in $k$-forcing all the vertices of $G$ are eventually colored blue. The $k$-\emph{forcing number} of $G$, denoted $F_k(G)$, is the minimum cardinality among all $k$-forcing set of $G$. When $k = 1$, the $k$-forcing number of $G$ is called the \emph{zero forcing number} and is denoted by $Z(G)$, and so $Z(G) = F_1(G)$.

The concept of bootstrap percolation has a similar definition to that of zero forcing, yet a different motivation. It was introduced as a simplified model of a magnetic system in 1979~\cite{CLR-1979}, and was later studied on random graphs and also on deterministic graphs. In particular, already some early papers considered bootstrap percolation in grids~\cite{BP-1998,Bol-2006}.

As before, we consider a dynamic coloring of the vertices of a graph $G$ that starts with an initial subset $S$ of blue vertices, with all remaining vertices colored white. We refer to blue vertices as ``infected'' and white vertices as ``uninfected''. For $r \in \mathbb{N}$, the color change rule in $r$-percolation is defined as follows: \emph{if a white (uninfected) vertex $u$ has at least $r$ blue (infected) neighbors, then the vertex $u$ is recolored blue.} The initial set $S$ of blue vertices is an \emph{$r$-neighbor bootstrap percolating set}, or simply an \emph{$r$-percolating set}, of $G$ if by repeatedly applying the color change rule in $r$-percolation all the vertices of $G$ are eventually colored blue. The \emph{$r$-neighbor bootstrap percolation number}, or simply the \emph{$r$-percolation number}, of $G$, denoted $m(G,r)$, is the minimum cardinality among all $r$-neighbor bootstrap percolating sets of $G$.

Motivated by the similarity of the definitions of the above two concepts, a common generalization of $k$-forcing and $r$-bootstrap percolation was introduced in~\cite{bdeh}. Let $p \in \mathbb{N}$ and $q \in \mathbb{N} \cup \lbrace \infty \rbrace$. As before, we consider a dynamic coloring of the vertices of a graph $G$ that starts with an initial subset $S$ of blue vertices, with all remaining vertices colored white. The color change rule in $(p,q)$-spreading is defined as follows: \emph{if a white vertex $w$ has at least $p$ blue neighbors, and one of the blue neighbors of~$w$ has at most~$q$ white neighbors, then the vertex $w$ is recolored blue.} The initial set $S$ of blue vertices is a $(p,q)$-\emph{spreading set} of $G$, if by repeatedly applying the $(p,q)$-spreading color change rule all the vertices of $G$ are eventually colored blue. The $(p,q)$-\emph{spreading number}, denoted $\spq(G)$, of a graph $G$ is the minimum cardinality among all $(p,q)$-spreading sets.

The $(p,q)$-spreading set is a generalization of the well-studied concepts of $k$-forcing and $r$-percolating sets in graphs. For $q \ge 2$, a $(1,q)$-spreading set is exactly a $q$-forcing set~\cite{acdp}, and the $(1,1)$-spreading set is a $1$-forcing set (also called a zero forcing set~\cite{AIM,HLS}), while if $q\ge \Delta(G)$ (including $q = \infty$), then a $(p,q)$-spreading set is exactly a $p$-percolating set. In the foundational paper~\cite{bdeh}, the complexity of the decision version of the $(p,q)$-spreading number was proved to be NP-complete for all $p$ and $q$, while efficient algorithms for determining these numbers were found in trees. In addition, for almost all values of $p$ and $q$, the $(p,q)$-spreading numbers of Cartesian grids $P_n\cp P_m$ were established in~\cite{bdeh}. 

The following trivial observation will be (at least implicitly) used several times in the paper.
\begin{observation}
\label{obs:basic}
 Let $G$ be a graph, $p\ge 2$ and $q \in \mathbb{N} \cup \lbrace \infty \rbrace$. If $P$ is a $(p,q)$-spreading set in $G$, then $P$ is also a $(p,q+1)$-spreading set in $G$ as well as a $(p-1,q)$-spreading set in $G$. In particular,
 $$\spq(G)\ge \sigma_{(p,q+1)}(G) \textrm{ and }  \spq(G)\ge \sigma_{(p-1,q)}(G).$$
\end{observation}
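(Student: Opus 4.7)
The plan is to give a direct simulation argument from the definitions. My starting observation is that the color change rules for $(p,q+1)$-spreading and for $(p-1,q)$-spreading both have hypotheses that are weaker than those for $(p,q)$-spreading, so any recoloring step that fires under the $(p,q)$-rule should also fire under either of the weaker rules, provided the set of currently blue vertices is at least as large.

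To make this precise for the first inequality, I would let $P$ be a $(p,q)$-spreading set in $G$ and denote by $B_0 = P \subseteq B_1 \subseteq \cdots \subseteq B_t = V(G)$ the sequence of blue sets obtained by (synchronously) exhausting the $(p,q)$-color change rule, and by $B'_0 = P \subseteq B'_1 \subseteq \cdots$ the analogous sequence under the $(p,q+1)$-rule. Then I would prove by induction on $i$ that $B_i \subseteq B'_i$. The base case $i=0$ is immediate. For the inductive step, take any $w \in B_{i+1} \setminus B_i$; by the $(p,q)$-rule there exists $u \in B_i \cap N(w)$ such that $w$ has at least $p$ neighbors in $B_i$ and $u$ has at most $q$ neighbors outside $B_i$. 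Using $B_i \subseteq B'_i$ from the inductive hypothesis, $w$ has at least $p$ neighbors in $B'_i$, and since $V(G) \setminus B'_i \subseteq V(G) \setminus B_i$, the vertex $u$ has at most $q \le q+1$ neighbors outside $B'_i$. Hence either $w \in B'_i$ already, or the $(p,q+1)$-rule fires on $w$ at step $i+1$, giving $w \in B'_{i+1}$ in either case. Setting $i = t$ yields $V(G) = B_t \subseteq B'_t$, so $P$ is a $(p,q+1)$-spreading set.

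For the second inequality, the same argument works verbatim, with ``at least $p$ neighbors of $w$'' relaxed to ``at least $p-1$ neighbors of $w$''; the assumption $p \ge 2$ enters only to ensure that $p - 1 \ge 1$ remains a valid first parameter. The two displayed inequalities then follow immediately from the definition of $\sigma_{(p,q)}(G)$ as a minimum cardinality. There is no real obstacle here; the only subtlety worth flagging is the monotonicity $V(G) \setminus B'_i \subseteq V(G) \setminus B_i$, which ensures that enlarging the blue set can only make the ``at most $q$ white neighbors'' condition easier to satisfy, never harder.
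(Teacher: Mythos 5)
Your proof is correct. The paper states this as a trivial observation without proof, and your monotonicity induction (showing $B_i \subseteq B'_i$ because enlarging the blue set only weakens both the ``at least $p$ blue neighbors'' and ``at most $q$ white neighbors'' conditions) is precisely the intended reasoning, carried out carefully and in full.
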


A graph is \emph{claw}-\emph{free} if it does not contain $K_{1,3}$ as an induced subgraph. A \emph{cubic graph} (also called a $3$-\emph{regular graph}) is a graph in which every vertex has degree~$3$. In this paper we study spreading in claw-free cubic graphs.

\subsection{Graph theory notation and terminology}
\label{S:notation}

For notation and graph theory terminology, we in general follow~\cite{HaHeHe-23}. Specifically, let $G$ be a graph with vertex set $V(G)$ and edge set $E(G)$, and of order~$n(G) = |V(G)|$ and size $m(G) = |E(G)|$. A \emph{neighbor} of a vertex $v$ in $G$ is a vertex $u$ that is adjacent to $v$, that is, $uv \in E(G)$. The \emph{open neighborhood} $N_G(v)$ of a vertex $v$ in $G$ is the set of neighbors of $v$, while the \emph{closed neighborhood} of $v$ is the set $N_G[v] = \{v\} \cup N(v)$. We denote the \emph{degree} of $v$ in $G$ by $\deg_G(v) = |N_G(v)|$, and $\Delta(G)=\max\{\deg_G(v):\, v\in V(G)\}$. For a set $S \subseteq V(G)$, its \emph{open neighborhood} is the set $N_G(S) = \cup_{v \in S} N_G(v)$, and its \emph{closed neighborhood} is the set $N_G[S] = N_G(S) \cup S$.

For a set $S \subseteq V(G)$, the subgraph induced by $S$ is denoted by $G[S]$. Further, the subgraph of $G$ obtained from $G$ by deleting all vertices in $S$ and all edges incident with vertices in $S$ is denoted by $G - S$; that is, $G-S = G[V(G)\setminus S]$. If $S = \{v\}$, then we also denote $G - S$ simply by $G - v$. If $F$ is a graph, then an $F$-\emph{component} of $G$ is a component isomorphic to~$F$. We denote by $\alpha(G)$ the independence number of $G$, and so $\alpha(G)$ is the maximum cardinality among all independent sets of $G$. A \emph{vertex cover} of $G$ is a set $S$ of vertices such that every edge of $G$ is incident with at least one vertex in $S$. The \emph{vertex covering number} $\beta(G)$ (also denoted $\tau(G)$ in the literature), equals the minimum cardinality among all vertex covers of $G$. By the well-known Gallai theorem, $\alpha(G)+\beta(G)=n(G)$ holds in any graph $G$.

We denote the \emph{path}, \emph{cycle}, and \emph{complete graph} on $n$ vertices by $P_n$, $C_n$, and $K_n$, respectively, and we denote the \emph{complete bipartite graph} with partite sets of cardinality~$n$ and $m$ by $K_{n,m}$. A \emph{triangle} in $G$ is a subgraph isomorphic to $K_3$, whereas a \emph{diamond} in $G$ is an induced subgraph of $G$ isomorphic to $K_4$ with one edge missing, denoted by $K_4 - e$. A graph is \emph{diamond}-\emph{free} if it does not contain $K_4-e$ as an induced subgraph. For $k \ge 1$ an integer, we use the standard notation $[k] = \{1,\ldots,k\}$.

\subsection{Main results and organization of the paper}

Davila and Henning studied zero forcing in claw-free cubic graphs~\cite{dahe-2020}. They established two upper bounds, which are summarized in the following result (for the definition of diamond-necklace $N_k$, see Section~\ref{sec:CFcubic}).

\begin{theorem}{\rm (\cite{dahe-2020})}
\label{thm:davila-henning}
If $G\ne K_4$ is a connected, claw-free, cubic graph of order $n$, then the following properties hold: \\ [-22pt]
\begin{enumerate}
\item[{\rm (a)}] $Z(G) \le \alpha(G) + 1$; \1
\item[{\rm (b)}]  $Z(G) \le \frac{n}{3} + 1$, unless $G = N_2$ in which case $Z(G) = \frac{1}{3} (n + 4)$.
\end{enumerate} 
\end{theorem}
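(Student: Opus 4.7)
The proof rests on a classical structural observation: in a claw-free cubic graph $G$, every vertex lies in a triangle, since the three neighbors of any vertex must contain at least one edge to avoid inducing a $K_{1,3}$. Iterating this observation, $V(G)$ partitions into \emph{units} that are either triangles (three vertices, with three outgoing edges to the rest of $G$) or diamonds (four vertices of a $K_4-e$, with two outgoing edges from the two degree-$2$ corners). The edges outside these units form a connected ``skeleton'' which would organize both arguments.

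For part~(a), I would take a maximum independent set $I$ of $G$ and construct a zero forcing set of size $|I|+1$. Since $|I\cap U|\le 1$ for triangle-units $U$ and $|I\cap U|\le 2$ for diamond-units $U$, and since every vertex of $I$ has all three neighbors outside $I$, the set $I$ already supplies most of what a forcing chain needs inside each unit. I would then add one global ``seed'' vertex in a starting unit (the ``$+1$'') and propagate forcing along a spanning tree of the skeleton: in a triangle-unit two blue vertices suffice to force the third, and in a diamond-unit three blue vertices including a degree-$2$ corner force the fourth, after which the outgoing skeleton edge supplies a blue vertex in the next unit. A careful charging along the spanning tree shows that only one additional seed is ever required globally.

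For part~(b), I would argue by induction on $n(G)$, with the small connected claw-free cubic graphs as base cases. The inductive step removes a peripheral unit from $G$ and restitches the exposed skeleton edges, producing a smaller connected claw-free cubic graph $G'$ on $n(G)-3$ or $n(G)-4$ vertices; the inductive bound applied to $G'$ together with one seed inside the removed unit lifts to a zero forcing set of $G$ of size at most $n(G)/3+1$. The exception $G=N_2$ arises precisely when no peripheral unit admits a legal restitching, and for this graph one computes $Z(N_2)=4$ directly, matching $(n+4)/3$. The main obstacle I anticipate is the case analysis in part~(b): classifying which peripheral configurations admit a legal restitching that preserves cubicity, claw-freeness, and connectivity, and verifying that the only obstructions collapse to the necklace $N_2$. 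A secondary subtlety is the matching lower bound $Z(N_2)\ge 4$, which should follow by arguing that any zero forcing set must meet each of the two diamonds of $N_2$ in at least two vertices.
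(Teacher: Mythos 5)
First, a point of reference: the paper does not prove Theorem~\ref{thm:davila-henning} at all --- it is imported verbatim from Davila and Henning~\cite{dahe-2020} --- so there is no in-paper proof to compare yours against. Your overall framework (the observation that every vertex lies in a triangle, the unique partition into triangle- and diamond-units, and unit-by-unit propagation) is the right one and is exactly the machinery of Lemma~\ref{lem:known} that both the cited source and the present paper rely on. Judged on its own merits, however, your sketch has real gaps in both parts.

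For part (a), the entire content of the theorem is your sentence ``a careful charging along the spanning tree shows that only one additional seed is ever required globally,'' and this is asserted rather than argued. The problematic configuration is easy to exhibit: suppose the forcing process enters a triangle-unit $U=\{v,b,c\}$ along the skeleton edge $uv$, where $v$ happens to be the unique vertex of $I$ in $U$. Then $u$ forces nothing new ($v$ is already blue), and $v$ has two white neighbors $b,c$ inside $U$, so no force can occur from within $U$; one must instead wait for a second entry into $U$ through the outside neighbor of $b$ or of $c$, and whether such a second entry always materializes (without spending a second seed) is precisely the global argument you have not supplied. Choosing $I$ adapted to the spanning tree, or handling multiple entry points, is where the actual work lies. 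For part (b), the deletion-and-restitch induction breaks immediately on triangle-units: a triangle-unit has \emph{three} edges leaving it, so deleting it exposes three degree-$2$ vertices, which cannot be re-paired into a cubic graph. (Diamond-units, with two outgoing edges, do admit the surgery you describe, and your simulation of a force across the restitched edge by seeding a dominating vertex of the removed diamond is fine.) You would therefore need a different reduction for triangle-units, and the claim that the only obstruction collapses to $N_2$ is not derived. A more robust route to (b), consistent with how this paper handles the analogous spreading bounds (compare Theorems~\ref{thm:2percolation} and~\ref{thm:s21}), is to build a forcing set directly with one vertex per unit plus a constant, using $u(G)\le n/3$ (with equality only when there are no diamond-units), rather than inducting on surgered graphs; the $N_2$ exception then surfaces from the lower-bound side, since $u(N_2)+1=3<4=Z(N_2)$.
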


It is proved in~\cite{dahe-2020} that both upper bounds in Theorem~\ref{thm:davila-henning} are sharp. He et al.~\cite{he2024} recently characterized the connected, claw-free cubic graphs $G$ for which $Z(G)=\alpha(G) + 1$. Notably, there are only three sporadic graphs that attain this value, namely $N_2$, $N_3$ and the Hamming graph $K_3\Box K_2$. Since these graphs have $8, 12$ and $6$ vertices, respectively, the following result immediately follows. 
\begin{theorem}{\rm (\cite{he2024})}
\label{thm:he}
If $G\ne K_4$ is a connected, claw-free, cubic graph of order at least~$14$, then $Z(G) \le \alpha(G)$.
\end{theorem}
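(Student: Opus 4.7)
The plan is to derive Theorem~\ref{thm:he} as an immediate corollary of the characterization established in~\cite{he2024} together with Theorem~\ref{thm:davila-henning}(a). First, I would apply Theorem~\ref{thm:davila-henning}(a) to any connected, claw-free, cubic graph $G \ne K_4$ of order $n \ge 14$, obtaining the upper bound $Z(G) \le \alpha(G)+1$. Thus the only way the desired conclusion $Z(G) \le \alpha(G)$ can fail is if equality $Z(G) = \alpha(G)+1$ holds.

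Second, I would invoke the characterization of~\cite{he2024}, which asserts that the connected, claw-free, cubic graphs $G$ with $Z(G) = \alpha(G)+1$ are exactly $N_2$, $N_3$, and the Hamming graph $K_3 \Box K_2$. Since these three graphs have orders $8$, $12$, and $6$ respectively, none of them has order at least $14$. Consequently, under our hypothesis $n \ge 14$ the equality case is ruled out, and the strict bound $Z(G) \le \alpha(G)$ must hold.

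The main obstacle in this argument is not the deduction itself, which is essentially immediate once one has the two ingredients in hand, but rather the underlying characterization from~\cite{he2024}. That characterization is where the real work lies: one must argue that if $G$ is a connected, claw-free, cubic graph with $Z(G) = \alpha(G)+1$, then $G$ is forced to be one of the three sporadic examples. For the purposes of this paper, however, we use the characterization as a black box, and the statement of Theorem~\ref{thm:he} follows purely from checking the orders of the three exceptional graphs. No further case analysis is needed here.
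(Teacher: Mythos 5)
Your proposal is correct and matches the paper's own derivation exactly: the paper likewise combines the upper bound $Z(G) \le \alpha(G)+1$ from Theorem~\ref{thm:davila-henning}(a) with the characterization in~\cite{he2024} of the equality case (only $N_2$, $N_3$, and $K_3 \Box K_2$, of orders $8$, $12$, and $6$), so the hypothesis $n \ge 14$ rules out equality. Nothing further is needed.
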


Note that $Z(G)=\sigma_{(1,1)}(G)$, and the mentioned result (Theorem~\ref{thm:davila-henning}(a)) is placed in the $(1,1)$-entry of Table~\ref{Tabela1}.

From properties of connected, claw-free, cubic graphs $G$ which we discuss in Section~\ref{sec:CFcubic}, if $G \ne K_4$ then there is a unique partition of the vertex set $V(G)$ into subsets each of which induces either a triangle or a diamond. The number of these subsets, called units, in $G$ is denoted by $u(G)$. Table~\ref{Tabela1} summarizes the values of $(p,q)$-spreading numbers in claw-free cubic graphs $G \ne K_4$ for all possible $p$ and $q$ in terms of the parameters $\alpha(G)$, $\beta(G)$, $u(G)$ and $n(G)$. Besides the $(1,1)$-entry giving the mentioned upper bound on $\sigma_{(1,1)}(G)$ all other values in Table~\ref{Tabela1} are obtained in this paper. It is easy to see that $\sigma_{(1,2)}(G)=2$ for any claw-free cubic graph $G$. Indeed, any adjacent pair of vertices in $G$ is a $(1,2)$-spreading set in $G$. The results $\sigma_{(1,q)}(G)=1$ for any $q\ge 3$ and $\sigma_{(p,q)}(G)=n(G)$ for any $p\ge 4$ and any $q\in\mathbb{N}\cup\{\infty\}$ are trivial consequences of definitions.

\begin{table}[ht!]
\begin{center}
\begin{tabular}{| c | c |c | c | }
\hline
\diagbox{$p$}{$q$} & 1 & 2 & $\ge 3$   \\
 \hline
1 & $\le \alpha(G)(+1)$, $n\ge 14$ {\scriptsize [Thm~\ref{thm:he}]} {\scriptsize ([Thm~\ref{thm:davila-henning}])}  & 2 & 1                                          \\ \hline
2 &  $u(G)+1$ or $u(G)+2$ \hskip33pt {\scriptsize [Thm~\ref{thm:s21}]}    & $u(G)$ or $u(G)+1$ \hskip9pt {\scriptsize [Cor~\ref{cor:sigma22-23}]} & $u(G)$ or $u(G)+1$  \hskip8pt {\scriptsize [Cor~\ref{cor:2percolation}]}                                          \\ \hline
3 & $\beta(G)$ or $\beta(G)+1$ \hskip44pt {\scriptsize [Prp~\ref{prp:s31}]} & \hskip45pt $\beta(G)$ \hskip25pt {\scriptsize [Prp~\ref{prop:sigma32}]} & \hskip45pt $\beta(G)$ \hskip25pt {\scriptsize [Prp~\ref{prop:sigma32}]} \\ \hline
$\ge 4$ & \hskip -1.5cm $n(G)$ & \hskip -5pt  $n(G)$ & \hskip -6pt  $n(G)$ \\ \hline
\end{tabular}
\caption{$(p,q)$-spreading numbers of claw-free cubic graphs $G$, $G\ne K_4$. }
\label{Tabela1}
\end{center}
\end{table}

The paper is organized as follows. In Section~\ref{sec:3-percolation}, we consider $(3,q)$-spreading numbers in claw-free cubic graphs; their values can be found in Line 3 of Table~\ref{Tabela1}. (Note that $3$-percolation coincides with $(3,q)$-spreading when $q\ge \Delta(G)$.) In Section~\ref{sec:2-percolation}, we deal with $(2,q)$-spreading numbers of claw-free cubic graphs; see Line 2 in Table~\ref{Tabela1}. In the case of $(2,q)$-spreading, where $q\ge 3$, $\sigma_{(2,q)}(G)=u(G)+1$ precisely when $G$ is a diamond necklace $N_k$, while in all other claw-free cubic graphs $\sigma_{(2,q)}(G)=u(G)$. Thus the $2$-percolation number of claw-free cubic graphs is fully determined. On the other hand, when $q\le 2$ a claw-free cubic graph can attain two values: $\sigma_{(2,2)}(G)\in\{u(G),u(G)+1\}$, and $\sigma_{(2,1)}(G)\in\{u(G)+1,u(G)+2\}$, but we have not established exactly which claw-free cubic graphs attain which of the two possible values. We conclude the paper with some open problems that arise from this study.

\section{Claw-free cubic graphs}
\label{sec:CFcubic}

The following property of connected, claw-free, cubic graphs is established in~\cite{HeLo-12}.

\begin{lemma}{\rm (\cite{HeLo-12})}
\label{lem:known}
If $G \ne K_4$ is a connected, claw-free, cubic graph of order $n$, then the vertex set $V(G)$ can be uniquely partitioned into sets each of which induces a triangle or a diamond in $G$.
\end{lemma}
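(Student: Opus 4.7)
The plan is to exploit the tight structural constraints imposed by claw-freeness, 3-regularity, and the exclusion of $K_4$. First I would show that every vertex lies in at least one triangle and in at most two triangles. Since $G$ is claw-free, $N_G(v)$ cannot be independent, so at least one edge joins two neighbors of $v$, giving a triangle through $v$. The number of triangles through $v$ equals the number of edges in $G[N_G(v)]$, so three triangles through $v$ would force $G[N_G[v]]\cong K_4$; being a component of a connected 3-regular graph, this would mean $G=K_4$, contradicting the hypothesis. Hence every vertex is in either exactly one or exactly two triangles.

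Next I would define what the blocks of the partition should be. Call a set $U\subseteq V(G)$ a \emph{unit} if $G[U]$ is a diamond, or $G[U]$ is a triangle not contained in any diamond of $G$. To assign a unit to each vertex $v$, I would distinguish two cases. If $v$ lies in two triangles, then these must share an edge $vb$ (they correspond to two edges of $G[N_G(v)]$ meeting at a common neighbor), so $N_G(v)=\{a,b,c\}$ with $ab,bc\in E(G)$ and $ac\notin E(G)$ (otherwise $K_4$). Then $\{v,a,b,c\}$ induces a diamond; this is the unit of $v$. If $v$ lies in exactly one triangle $T=vxy$, look at the ``outside'' neighbors $v',x',y'$ of $v,x,y$: we must have $x'\ne v$ and $y'\ne v$ (these would create a second triangle through $v$). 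If $x'=y'=:w$, then $\{v,x,y,w\}$ induces a diamond, which is the unit of $v$; otherwise $T$ does not extend to a diamond and $\{v,x,y\}$ itself is the unit of $v$.

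I would then verify that the map $v\mapsto U(v)$ produces a partition. For each triangle $T$, the three ``outside'' neighbors cannot all coincide (that would force $K_4$), so at most one pair of them can coincide; hence $T$ extends to at most one diamond. Combined with the analysis above, this shows that each vertex belongs to at most one diamond-unit, and cannot simultaneously lie in a diamond-unit and a triangle-unit (since a triangle contained in a diamond is not a triangle-unit). Disjointness of two triangle-units is immediate, because a shared vertex would belong to two triangles and therefore to a diamond-unit. Together with the fact that every vertex is in some unit, this yields a partition.

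Finally, uniqueness of the partition follows from the fact that the unit containing $v$ is determined intrinsically by the local structure at $v$: the adjacencies inside $N_G(v)$ and the identities of the outside neighbors $x',y'$. Any partition of $V(G)$ into triangles and diamonds must respect these local constraints, so it coincides with the one constructed above. The main obstacle, which will require the most careful case analysis, is showing the consistency of the two cases in the definition (in particular, that whenever $v$ is assigned to a diamond $D$, every other vertex of $D$ is assigned to the same diamond $D$); but this reduces to enumerating the local neighborhood configurations permitted by claw-freeness and $3$-regularity once $K_4$ is ruled out.
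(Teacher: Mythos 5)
The paper offers no proof to compare against: Lemma~\ref{lem:known} is quoted from~\cite{HeLo-12}. Judged on its own, your existence argument is sound and is essentially the standard one: claw-freeness forces an edge inside $N_G(v)$ and hence a triangle through every vertex; $3$-regularity together with $G\ne K_4$ caps the number of edges of $G[N_G(v)]$ at two; and your case analysis on $G[N_G(v)]$ and on the outside neighbors of a triangle correctly identifies the diamond-units and the triangle-units, shows each triangle extends to at most one diamond, and yields that the units are pairwise disjoint and cover $V(G)$.

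The uniqueness half, however, is asserted rather than proved. Saying that any partition into triangles and diamonds ``must respect these local constraints'' does not by itself rule out, for instance, a partition in which some block is one of the two triangles lying \emph{inside} a diamond. The missing ingredient is the observation that in any partition of $V(G)$ into sets inducing triangles or diamonds, every vertex must have at least two neighbors inside its own block. With that in hand the argument closes quickly: if a block $B$ were a triangle contained in a diamond $D=\{v,a,b,c\}$ (say $B=\{v,a,b\}$ with $ac$ the missing edge), then the fourth vertex $c$ has two of its three neighbors, namely $v$ and $b$, inside $B$, hence at most one neighbor in its own block --- impossible. Therefore every triangle block of an arbitrary such partition is a triangle contained in no diamond, every diamond block is one of your diamond-units (you already showed each vertex lies in at most one diamond), and since each vertex lies in exactly one unit of your canonical partition, the two partitions coincide. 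This is a short argument, but it is the actual content of the uniqueness claim and should be written out rather than waved at.
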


By Lemma~\ref{lem:known}, the vertex set $V(G)$ of connected, claw-free, cubic graph $G \ne K_4$  can be uniquely partitioned into sets each of which induce a triangle or a diamond in $G$. Following the notation introduced in~\cite{HeLo-12}, we refer to such a partition as a \emph{triangle}-\emph{diamond partition} of $G$, abbreviated $\Delta$-D-partition. We call every triangle and diamond induced by a set in our $\Delta$-D-partition a \emph{unit} of the partition. A unit that is a triangle is called a \emph{triangle-unit} and a unit that is a diamond is called a \emph{diamond-unit}. (We note that a triangle-unit is a triangle that does not belong to a diamond.) Two units in the $\Delta$-D-partition are \emph{adjacent} if there is an edge joining a vertex in one unit to a vertex in the other unit. In what follows, we define several structures in claw-free, cubic graphs that we will need when proving our main results. Some of these definitions have already appeared in the literature, see, for example,~\cite{BaHe-22a,BaHe-22b,HeLo-12}.

For $k \ge 2$ an integer, let $N_k$ be the connected cubic graph constructed as follows. Take $k$ disjoint copies $D_1, D_2, \ldots, D_{k}$ of a diamond, where $V(D_i) = \{a_i,b_i,c_i,d_i\}$ and where $a_ib_i$ is the missing edge in $D_i$. Let $N_k$ be obtained from the disjoint union of these $k$ diamonds by adding the edges $\{a_ib_{i+1} \colon i \in [k-1] \}$  and adding the edge $a_{k}b_1$. We call $N_k$ a \emph{diamond}-\emph{necklace} with $k$ diamonds. Let $\cN_\cub = \{ N_k \mid k \ge 2\}$. A diamond-necklace, $N_4$, with four diamonds is illustrated in Figure~\ref{fig:N4}.

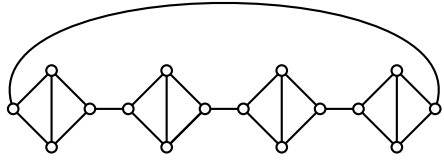
\begin{figure}[htb]
\begin{center}
\begin{tikzpicture}[scale=.8,style=thick,x=.85cm,y=.85cm]
\def\vr{2.5pt} 

%
\path (2.25,0.5) coordinate (a);
\path (2.25,2) coordinate (b);
\path (1.5,1.25) coordinate (c);
\path (3,1.25) coordinate (d);
\path (4.5,0.5) coordinate (w);
\path (4.5,2) coordinate (x);
\path (3.75,1.25) coordinate (y);
\path (5.25,1.25) coordinate (z);
\path (6.75,0.5) coordinate (k);
\path (6.75,2) coordinate (l);
\path (6,1.25) coordinate (m);
\path (7.5,1.25) coordinate (n);
\path (8.25,1.25) coordinate (h);
\path (9,0.5) coordinate (i);
\path (9,2) coordinate (j);
\path (9.75,1.25) coordinate (o);
\draw (a) -- (c);
\draw (a) -- (d);
\draw (b) -- (c);
\draw (b) -- (d);
\draw (a) -- (b);
%
\draw (d) -- (y);
\draw (y)--(w)--(z)--(w)--(x)--(y);
\draw (x)--(z);
\draw (h) -- (i);
\draw (h) -- (j);
\draw (i) -- (j);
\draw (k) -- (m);
\draw (k) -- (n);
\draw (l) -- (m);
\draw (l) -- (n);
\draw (k) -- (l);
\draw (m) -- (z);
\draw (n) -- (h);
\draw (i)--(o)--(j);
\draw (c) to[out=110,in=70, distance=2.5cm] (o);
\draw (a) [fill=white] circle (\vr);
\draw (b) [fill=white] circle (\vr);
\draw (c) [fill=white] circle (\vr);
\draw (d) [fill=white] circle (\vr);
\draw (h) [fill=white] circle (\vr);
\draw (i) [fill=white] circle (\vr);
\draw (j) [fill=white] circle (\vr);
\draw (k) [fill=white] circle (\vr);
\draw (l) [fill=white] circle (\vr);
\draw (m) [fill=white] circle (\vr);
\draw (n) [fill=white] circle (\vr);
\draw (w) [fill=white] circle (\vr);
\draw (x) [fill=white] circle (\vr);
\draw (y) [fill=white] circle (\vr);
\draw (z) [fill=white] circle (\vr);
\draw (o) [fill=white] circle (\vr);

\draw[anchor = north] (a) node {{\small $c_1$}};
\draw[anchor = south] (b) node {{\small $d_1$}};
\draw[anchor = north] (c) node {{\small $b_1$}};
\draw[anchor = north] (d) node {{\small $a_1$}};

\draw[anchor = north] (w) node {{\small $c_2$}};
\draw[anchor = south] (x) node {{\small $d_2$}};
\draw[anchor = north] (y) node {{\small $b_2$}};
\draw[anchor = north] (z) node {{\small $a_2$}};

\draw[anchor = north] (k) node {{\small $c_3$}};
\draw[anchor = south] (l) node {{\small $d_3$}};
\draw[anchor = north] (m) node {{\small $b_3$}};
\draw[anchor = north] (n) node {{\small $a_3$}};

\draw[anchor = south] (j) node {{\small $d_4$}};
\draw[anchor = north] (i) node {{\small $c_4$}};
\draw[anchor = north] (h) node {{\small $b_4$}};
\draw[anchor = north] (o) node {{\small $a_4$}};
%
\end{tikzpicture}
\end{center}
\vskip -0.5cm
\caption{A diamond-necklace $N_4$} \label{fig:N4}
\end{figure}

For $k \ge 1$ an integer, let $F_{2k}$ be the connected cubic graph constructed as follows. Take $2k$ disjoint copies $T_1, T_2, \ldots, T_{2k}$ of a triangle, where $V(T_i) = \{x_i,y_i,z_i\}$ for $i \in [2k]$. Let
\[
\begin{array}{lcl}
E_a & = & \{x_{2i-1}x_{2i} \colon i \in [k] \} \1 \\
E_b & = & \{y_{2i-1}y_{2i} \colon i \in [k] \} \1 \\
E_c & = & \{z_{2i}z_{2i+1} \colon i \in [k] \},
\end{array}
\]
where addition is taken modulo~$2k$ (and so, $z_1 = z_{2k+1}$). Let $F_{2k}$ be obtained from the disjoint union of these $2k$ triangles by adding the edges $E_a \cup E_b \cup E_c$. The resulting graph $F_{2k}$ we call a \emph{triangle}-\emph{necklace} with $2k$ triangles. Let $\cT_\cub = \{ F_{2k} \colon k \ge 1\}$. The triangle-necklace $F_6$ is shown in Figure~\ref{fig:Tneck}.

\begin{figure}[htb]
\begin{center}
\begin{tikzpicture}[scale=.8,style=thick,x=.85cm,y=.85cm]
\def\vr{2.5pt} 
\path (2,0.75) coordinate (a);
\path (3,0) coordinate (b);
\path (3,1.5) coordinate (c);
\path (4,0) coordinate (d);
\path (4,1.5) coordinate (e);
\path (5,0.75) coordinate (f);
\path (6,0.75) coordinate (g);
\path (7,0) coordinate (h);
\path (7,1.5) coordinate (i);
\path (8,0) coordinate (j);
\path (8,1.5) coordinate (k);
\path (9,0.75) coordinate (l);
\path (10,0.75) coordinate (m);
\path (11,0) coordinate (n);
\path (11,1.5) coordinate (o);
\path (12,0) coordinate (p);
\path (12,1.5) coordinate (q);
\path (13,0.75) coordinate (r);
\draw (b)--(a)--(c)--(b)--(d)--(e)--(f)--(d);
\draw (c)--(e);
\draw (f)--(g);
\draw (i)--(g)--(h)--(i)--(k)--(l)--(j)--(k);
\draw (n)--(m)--(o)--(n)--(p)--(r)--(q)--(p);
\draw (h)--(j);
\draw (l)--(m);
\draw (o)--(q);
%
\draw (a) to[out=110,in=70, distance=2.5cm] (r);
\draw (a) [fill=white] circle (\vr);
\draw (b) [fill=white] circle (\vr);
\draw (c) [fill=white] circle (\vr);
\draw (d) [fill=white] circle (\vr);
\draw (e) [fill=white] circle (\vr);
\draw (f) [fill=white] circle (\vr);
\draw (g) [fill=white] circle (\vr);
\draw (h) [fill=white] circle (\vr);
\draw (i) [fill=white] circle (\vr);
\draw (j) [fill=white] circle (\vr);
\draw (k) [fill=white] circle (\vr);
\draw (l) [fill=white] circle (\vr);
\draw (m) [fill=white] circle (\vr);
\draw (n) [fill=white] circle (\vr);
\draw (o) [fill=white] circle (\vr);
\draw (p) [fill=white] circle (\vr);
\draw (q) [fill=white] circle (\vr);
\draw (r) [fill=white] circle (\vr);
\draw[anchor = north] (a) node {{\small $x_1$}};
\draw[anchor = north] (b) node {{\small $y_1$}};
\draw[anchor = south] (c) node {{\small $z_1$}};
\draw[anchor = north] (d) node {{\small $y_2$}};
\draw[anchor = south] (e) node {{\small $z_2$}};
\draw[anchor = north] (f) node {{\small $x_2$}};

\draw[anchor = north] (g) node {{\small $x_3$}};
\draw[anchor = north] (h) node {{\small $y_3$}};
\draw[anchor = south] (i) node {{\small $z_3$}};
\draw[anchor = north] (j) node {{\small $y_4$}};
\draw[anchor = south] (k) node {{\small $z_4$}};
\draw[anchor = north] (l) node {{\small $x_4$}};

\draw[anchor = north] (m) node {{\small $x_5$}};
\draw[anchor = north] (n) node {{\small $y_5$}};
\draw[anchor = south] (o) node {{\small $z_5$}};
\draw[anchor = north] (p) node {{\small $y_6$}};
\draw[anchor = south] (q) node {{\small $z_6$}};
\draw[anchor = north] (r) node {{\small $x_6$}};

\end{tikzpicture}
\end{center}
\vskip -0.35cm
\caption{The triangle-necklace $F_6$} \label{fig:Tneck}
\end{figure}
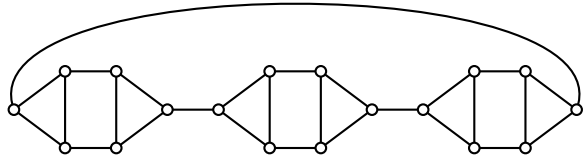

For $k \ge 2$ an integer, take $2k$ disjoint copies $T_1, T_2, \ldots, T_{2k}$ of a triangle, where $V(T_i) = \{x_i,y_i,z_i\}$ for $i \in [2k]$, and take $k$ disjoint copies $D_1, D_2, \ldots, D_{k}$ of a diamond, where $V(D_j) = \{a_j,b_j,c_j,d_j\}$ and where $a_jb_j$ is the missing edge in $D_j$ for $j \in [k]$. Let
\[
\begin{array}{lcl}
E_1 & = & \{x_{2i-1}a_{i} \colon i \in [k] \} \1 \\
E_2 & = & \{x_{2i}b_{i} \colon i \in [k] \} \1 \\
E_3 & = & \{y_{2i-1}z_{2i+1} \colon i \in [k-1] \} \cup \{y_{2k-1}z_1\}, \1 \\
E_4 & = & \{y_{2i}z_{2i+2} \colon i \in [k-1] \} \cup \{y_{2k}z_{2}\}.
\end{array}
\]
Let $H_{2k}$ be obtained from the disjoint union of these $2k$ triangles and $k$ diamonds by adding the edges $E_1 \cup E_2 \cup E_3 \cup E_4$. The resulting graph $H_{2k}$ we call a \emph{triangle}-\emph{diamond}-\emph{necklace} with $k$ diamonds. Let $\cH_\cub = \{ H_{2k} \colon k \ge 2\}$. The triangle-diamond-necklace $H_6$ is shown in Figure~\ref{fig:TDneck}.

\begin{figure}[htb]
\begin{center}
\begin{tikzpicture}[scale=.8,style=thick,x=.85cm,y=.85cm]
\def\vr{2.5pt} 
\path (4,0) coordinate (d);
\path (4,1.5) coordinate (e);
\path (5,0.75) coordinate (f);
\path (6,0.75) coordinate (g);
\path (7,0) coordinate (h);
\path (7,1.5) coordinate (i);

\path (8,0.75) coordinate (l);
\path (9,0.75) coordinate (m);
\path (10,0) coordinate (n);
\path (10,1.5) coordinate (o);
\path (4,2.5) coordinate (d1);
\path (4,4) coordinate (e1);
\path (5,3.25) coordinate (f1);
\path (6,3.25) coordinate (g1);
\path (7,2.5) coordinate (h1);
\path (7,4) coordinate (i1);

\path (8,3.25) coordinate (l1);
\path (9,3.25) coordinate (m1);
\path (10,2.5) coordinate (n1);
\path (10,4) coordinate (o1);
\path (4,5) coordinate (d2);
\path (4,6.5) coordinate (e2);
\path (5,5.75) coordinate (f2);
\path (6,5.75) coordinate (g2);
\path (7,5) coordinate (h2);
\path (7,6.5) coordinate (i2);

\path (8,5.75) coordinate (l2);
\path (9,5.75) coordinate (m2);
\path (10,5) coordinate (n2);
\path (10,6.5) coordinate (o2);
\draw (d)--(e)--(f)--(d);
\draw (f)--(g);
\draw (i)--(g)--(h)--(i)--(l)--(h);
\draw (n)--(m)--(o)--(n);

\draw (l)--(m);
\draw (d1)--(e1)--(f1)--(d1);
\draw (f1)--(g1);
\draw (i1)--(g1)--(h1)--(i1)--(l1)--(h1);
\draw (n1)--(m1)--(o1)--(n1);

\draw (l1)--(m1);
\draw (d2)--(e2)--(f2)--(d2);
\draw (f2)--(g2);
\draw (i2)--(g2)--(h2)--(i2)--(l2)--(h2);
\draw (n2)--(m2)--(o2)--(n2);

\draw (l2)--(m2);
\draw (e)--(d1);
\draw (e1)--(d2);
\draw (o)--(n1);
\draw (o1)--(n2);
\draw (d) to[out=180,in=180, distance=1.5cm] (e2);
\draw (n) to[out=0,in=0, distance=1.5cm] (o2);
\draw (d) [fill=white] circle (\vr);
\draw (e) [fill=white] circle (\vr);
\draw (f) [fill=white] circle (\vr);
\draw (g) [fill=white] circle (\vr);
\draw (h) [fill=white] circle (\vr);
\draw (i) [fill=white] circle (\vr);

\draw (l) [fill=white] circle (\vr);
\draw (m) [fill=white] circle (\vr);
\draw (n) [fill=white] circle (\vr);
\draw (o) [fill=white] circle (\vr);
\draw (d1) [fill=white] circle (\vr);
\draw (e1) [fill=white] circle (\vr);
\draw (f1) [fill=white] circle (\vr);
\draw (g1) [fill=white] circle (\vr);
\draw (h1) [fill=white] circle (\vr);
\draw (i1) [fill=white] circle (\vr);

\draw (l1) [fill=white] circle (\vr);
\draw (m1) [fill=white] circle (\vr);
\draw (n1) [fill=white] circle (\vr);
\draw (o1) [fill=white] circle (\vr);
\draw (d2) [fill=white] circle (\vr);
\draw (e2) [fill=white] circle (\vr);
\draw (f2) [fill=white] circle (\vr);
\draw (g2) [fill=white] circle (\vr);
\draw (h2) [fill=white] circle (\vr);
\draw (i2) [fill=white] circle (\vr);

\draw (l2) [fill=white] circle (\vr);
\draw (m2) [fill=white] circle (\vr);
\draw (n2) [fill=white] circle (\vr);
\draw (o2) [fill=white] circle (\vr);

\draw[anchor = north] (d) node {{\small $z_1$}};
\draw[anchor = east] (e) node {{\small $y_1$}};
\draw[anchor = south] (f) node {{\small $x_1$}};

\draw[anchor = north] (g) node {{\small $a_1$}};
\draw[anchor = west] (h) node {{\small $d_1$}};
\draw[anchor = east] (i) node {{\small $c_1$}};
\draw[anchor = south] (l) node {{\small $b_1$}};

\draw[anchor = south] (m) node {{\small $x_2$}};
\draw[anchor = north] (n) node {{\small $z_2$}};
\draw[anchor = west] (o) node {{\small $y_2$}};

\draw[anchor = east] (d1) node {{\small $z_3$}};
\draw[anchor = east] (e1) node {{\small $y_3$}};
\draw[anchor = south] (f1) node {{\small $x_3$}};

\draw[anchor = north] (g1) node {{\small $a_2$}};
\draw[anchor = west] (h1) node {{\small $d_2$}};
\draw[anchor = east] (i1) node {{\small $c_2$}};
\draw[anchor = south] (l1) node {{\small $b_2$}};

\draw[anchor = south] (m1) node {{\small $x_4$}};
\draw[anchor = west] (n1) node {{\small $z_4$}};
\draw[anchor = west] (o1) node {{\small $y_4$}};

\draw[anchor = east] (d2) node {{\small $z_5$}};
\draw[anchor = south] (e2) node {{\small $y_5$}};
\draw[anchor = south] (f2) node {{\small $x_5$}};

\draw[anchor = north] (g2) node {{\small $a_3$}};
\draw[anchor = west] (h2) node {{\small $d_3$}};
\draw[anchor = east] (i2) node {{\small $c_3$}};
\draw[anchor = south] (l2) node {{\small $b_3$}};

\draw[anchor = south] (m2) node {{\small $x_6$}};
\draw[anchor = west] (n2) node {{\small $z_6$}};
\draw[anchor = south] (o2) node {{\small $y_6$}};

\end{tikzpicture}
\end{center}
\vskip -0.35cm
\caption{The triangle-diamond-necklace $H_6$} \label{fig:TDneck}
\end{figure}
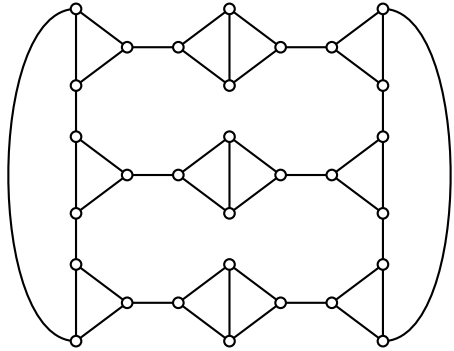

\section{$3$-Percolation in claw-free, cubic graphs}
\label{sec:3-percolation}

In this section, we study $3$-percolation in claw-free, cubic graphs. The following lemma is already known in the literature and follows readily from the definition of $r$-bootstrap percolation.

\begin{lemma}{\rm (\cite{HH-2024+})}
\label{lem:subgraph condition}
For $r \ge 2$ if $H$ is a subgraph of a graph $G$ such that every vertex in $H$ has strictly less than~$r$ neighbors in $G$ that belong to $V(G) \setminus V(H)$, then every $r$-percolating set of $G$ contains at least one vertex of $H$.
\end{lemma}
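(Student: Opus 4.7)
The plan is to proceed by contradiction using a ``first infection'' argument, which is the standard technique for such monotonicity statements in bootstrap percolation.

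First I would suppose that $S$ is an $r$-percolating set of $G$ with $S \cap V(H) = \emptyset$. Since $S$ is $r$-percolating, repeated application of the $r$-percolation color change rule eventually turns every vertex of $G$ blue. In particular, every vertex of $V(H)$ eventually becomes blue. Consider the discrete time steps $t = 0, 1, 2, \ldots$ of the percolation process, where at time $t = 0$ exactly the vertices of $S$ are blue. Since no vertex of $V(H)$ is blue at time $t=0$ but all vertices of $V(H)$ are blue by the end, there is a well-defined first time step $t^\star \ge 1$ at which some vertex of $V(H)$, say $v$, becomes blue.

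Next I would analyze the neighborhood of $v$ at the moment just before it is recolored. By the minimality of $t^\star$, no vertex of $V(H)$ is blue at time $t^\star - 1$; in particular, none of the neighbors of $v$ that lie inside $V(H)$ are blue at that moment. Consequently, every blue neighbor of $v$ at time $t^\star - 1$ must belong to $V(G) \setminus V(H)$. For $v$ to be recolored blue at time $t^\star$, the $r$-percolation rule requires $v$ to have at least $r$ blue neighbors. Thus $v$ has at least $r$ neighbors in $V(G) \setminus V(H)$, which directly contradicts the hypothesis that every vertex of $H$ has strictly fewer than $r$ neighbors in $V(G) \setminus V(H)$.

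The only thing to be careful about is the framing of the ``first infected vertex'': one should work with the natural time-stratified version of the percolation process (i.e., apply the rule simultaneously to all eligible vertices in each round) so that ``the first vertex of $V(H)$ to become blue'' is well-defined, and then observe that at the moment such a $v$ is recolored, its potential blue neighbors are restricted to $V(G)\setminus V(H)$. There is no real obstacle here; the result is essentially immediate from the definition of the $r$-percolation rule once the first-infection setup is in place.
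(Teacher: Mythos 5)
Your first-infection argument is correct and complete: taking the earliest moment at which a vertex $v$ of $V(H)$ is recolored forces all of $v$'s blue neighbors at that moment to lie in $V(G)\setminus V(H)$, so $v$ would need at least $r$ neighbors outside $H$, contradicting the hypothesis. The paper does not supply its own proof of this lemma (it is cited from \cite{HH-2024+} with the remark that it ``follows readily from the definition''), and your argument is precisely the standard one being alluded to; the only cosmetic point is that the conclusion implicitly requires $V(H)\neq\emptyset$, which is the intended reading of ``subgraph'' here.
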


As a consequence of Lemma~\ref{lem:subgraph condition}, we establish a relationship between the $r$-percolation number and the independence number of an $r$-regular graph. Recall that $\beta(G)$ is the cardinality of a minimum vertex cover of $G$.

\begin{proposition}
\label{prop:relate}
Let $G$ be an $r$-regular graph of order~$n$, where $r \ge 2$. A set $S\subset V(G)$ is an $r$-percolating set in $G$ if and only if $S$ is a vertex cover in $G$. In particular, $m(G,r) = \beta(G) = n - \alpha(G)$.
\end{proposition}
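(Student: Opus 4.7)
The plan is to prove the biconditional by exploiting the fact that in an $r$-regular graph, a white vertex has exactly $r$ neighbors, so the $r$-percolation rule fires on a white vertex precisely when \emph{all} of its neighbors are already blue. This observation collapses the dynamic condition into a static structural one.

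For the forward direction, I would suppose that $S$ is an $r$-percolating set and argue by contrapositive that $S$ must be a vertex cover. If $S$ were not a vertex cover, there would exist an edge $uv$ with $u,v\in V(G)\setminus S$. Let $H$ be the subgraph of $G$ induced by $\{u,v\}$. Since $G$ is $r$-regular and $uv\in E(G)$, each of $u$ and $v$ has exactly $r-1$ neighbors in $V(G)\setminus V(H)$, which is strictly less than $r$. Lemma~\ref{lem:subgraph condition} then forces every $r$-percolating set of $G$ to contain at least one vertex of $H$, contradicting $u,v\notin S$. Hence $S$ is a vertex cover.

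For the backward direction, suppose $S$ is a vertex cover of $G$. Then $V(G)\setminus S$ is an independent set, so every vertex $v\in V(G)\setminus S$ has all $r$ of its neighbors in $S$. Color the vertices of $S$ blue; then each white vertex $v$ satisfies the $r$-percolation rule immediately (all $r$ of its blue neighbors being in $S$) and gets recolored blue in a single step. Therefore $S$ is an $r$-percolating set.

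Combining the two implications, the minimum cardinality of an $r$-percolating set equals the minimum cardinality of a vertex cover, that is, $m(G,r)=\beta(G)$, and by the Gallai identity $\alpha(G)+\beta(G)=n$ we obtain $m(G,r)=n-\alpha(G)$. I do not anticipate a serious obstacle here; the only subtle point is recognizing that the subgraph lemma applied to a single edge $uv$ already captures the obstruction, and that no iteration of the color change rule beyond a single round is needed in the converse.
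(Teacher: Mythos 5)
Your proof is correct and follows essentially the same route as the paper's: both directions hinge on applying Lemma~\ref{lem:subgraph condition} to the subgraph induced by a single uncovered edge, and on observing that the complement of a vertex cover is independent so every white vertex percolates immediately. The only cosmetic difference is that you quantify the neighbor count as exactly $r-1$ where the paper merely says ``strictly less than $r$''; the argument is identical in substance.
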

\proof
Let $S$ be an $r$-percolating set of an $r$-regular graph $G$. Suppose that $G-S$ contains an edge $uv$. Let $H = G[\{u,v\}]$, and so $H = K_2$ is a subgraph of $G$ such that every vertex in $H$ has strictly less than~$r$ neighbors in $G$ that belong to $V(G) \setminus V(H)$. By Lemma~\ref{lem:subgraph condition}, we infer that every $r$-percolating set of $G$ contains at least one vertex of $H$, which is a contradiction with our assumption. Thus, $S$ is a vertex cover of $G$. Conversely, suppose that $S$ is a vertex cover of $G$, and let $I = V(G) \setminus S$. Since the set $I$ is an independent set of $G$, every vertex in $I$ has all its $r$ neighbors in the set $V(G) \setminus I$. The set $S$ is therefore an $r$-percolating set of $G$. Since $m(G,r)$ is the cardinality of a minimum $r$-percolating set, we infer that $m(G,r) = \beta(G)$. Consequently, by the famous Gallai theorem ($\alpha(G)+\beta(G)=n(G)$ in any graph $G$) we also have $m(G,r) = n - \alpha(G)$.~\QED

\medskip

Let us recall the statement of the well-known Brooks' Coloring Theorem. 

\begin{theorem}{\rm (Brooks' Coloring Theorem)}
\label{thm:brooks}
If $G$ is a connected graph, which is not a complete graph or an odd cycle, then $\chi(G)\le \Delta(G)$. 
\end{theorem}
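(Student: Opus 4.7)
The plan is to prove Brooks' theorem by exhibiting a vertex ordering under which greedy colouring uses at most $\Delta:=\Delta(G)$ colours. First I would dispose of the small-$\Delta$ cases: $\Delta\le 1$ is trivial, and $\Delta=2$ means $G$ is a path or an even cycle (odd cycles being excluded by hypothesis), both of which are $2$-colourable. So from now on assume $\Delta\ge 3$ and $G$ is neither complete nor an odd cycle.

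The central observation is that if vertices are ordered $v_1,\ldots,v_n$ so that each $v_i$ with $i<n$ has at most $\Delta-1$ earlier neighbours, and additionally the set of colours forced on the neighbours of $v_n$ has size at most $\Delta-1$, then greedy colouring in this order uses at most $\Delta$ colours. When $G$ is not $\Delta$-regular, I would take $v_n$ to be a vertex with $\deg_G(v_n)<\Delta$ and order the remaining vertices by the reverse of a BFS rooted at $v_n$. Then every $v_i$ with $i<n$ has its BFS-parent as a later neighbour, hence at most $\Delta-1$ earlier neighbours, and greedy yields a proper $\Delta$-colouring.

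When $G$ is $\Delta$-regular, I would find a triple $v_n,v_1,v_2$ with $v_1v_n,v_2v_n\in E(G)$, $v_1v_2\notin E(G)$, and $G-\{v_1,v_2\}$ connected. Given such a triple, order $v_3,\ldots,v_{n-1}$ as the reverse of a BFS rooted at $v_n$ in $G-\{v_1,v_2\}$; assign $v_1$ and $v_2$ the same colour, greedily colour $v_3,\ldots,v_{n-1}$ (each has at least one later neighbour in $G-\{v_1,v_2\}$, and counting the at most two extra earlier neighbours coming from $\{v_1,v_2\}$ still leaves at most $\Delta-1$ earlier neighbours in $G$), and finally colour $v_n$, whose $\Delta$ neighbours jointly use at most $\Delta-1$ colours because $v_1$ and $v_2$ agree.

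The main obstacle is producing the required triple in the regular case. If $G$ is $3$-connected, pick any vertex $v_n$: since $G\ne K_{\Delta+1}$, connectedness together with $\Delta$-regularity forces $N(v_n)$ not to be a clique, so non-adjacent $v_1,v_2\in N(v_n)$ exist, and $G-\{v_1,v_2\}$ remains connected by $3$-connectedness. If $G$ is only $2$-connected, one exploits a $2$-vertex-cut $\{x,y\}$ together with the components of $G-\{x,y\}$ to locate a triple inside one component using $\Delta$-regularity; the delicate sub-case in which every $2$-cut of $G$ consists of adjacent vertices requires a careful structural argument that either produces the triple or forces $G$ to coincide with $K_{\Delta+1}$, contradicting the hypothesis. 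This structural analysis in the non-$3$-connected regular case is the most intricate step of the proof.
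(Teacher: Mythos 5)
The paper does not prove this statement: Theorem~\ref{thm:brooks} is quoted as the classical Brooks' theorem and used as a black box, its only role being to derive the bound $\alpha(G)\ge n/\Delta$ in Theorem~\ref{thm:lower-bound-alpha}. So there is no proof in the paper to compare yours against; what you have written is an outline of the standard greedy/BFS proof of Brooks' theorem itself.

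Judged on its own, your outline follows the classical argument, and the parts you actually carry out are sound: the $\Delta\le 2$ reduction, the non-regular case via a reverse BFS ordering rooted at a vertex of degree less than $\Delta$, the counting that gives each non-root vertex at most $\Delta-1$ earlier neighbors, and the trick of giving two non-adjacent neighbors $v_1,v_2$ of $v_n$ the same color. But two cases are genuinely missing. First, in the $\Delta$-regular case you pass directly from ``$3$-connected'' to ``only $2$-connected''; a $\Delta$-regular connected graph with a cut vertex is covered by neither branch. (It is usually handled separately: if $x$ is a cut vertex, then for each component $C$ of $G-x$ the graph induced by $V(C)\cup\{x\}$ has $\deg(x)<\Delta$, so the non-regular argument colors each piece with $\Delta$ colors, and the colorings can be permuted to agree at $x$.) Second, the $2$-connected-but-not-$3$-connected case --- which is the crux of the whole theorem --- is only gestured at; you explicitly defer the ``careful structural argument.'' The standard resolution is to pick $v_n$ so that $G-v_n$ has a cut vertex, take $v_1,v_2$ to be neighbors of $v_n$ lying in different end-blocks of $G-v_n$ and distinct from the cut vertices of $G-v_n$ (they exist by $2$-connectedness of $G$ and are non-adjacent), and check that $G-\{v_1,v_2\}$ stays connected using $\Delta\ge 3$. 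As written, the proposal is a correct plan whose hardest step is left unexecuted, so it does not yet constitute a proof.
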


As a consequence of Theorem~\ref{thm:brooks}, we have the following trivial lower bound on the independence number of a graph.

\begin{theorem}
\label{thm:lower-bound-alpha}
If $G \ne K_{n}$ is a connected graph of order~$n$ with maximum degree~$\Delta \ge 3$, then $\alpha(G)  \ge \frac{n}{\Delta}$.
\end{theorem}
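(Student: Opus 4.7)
The plan is to derive the bound as an immediate corollary of Brooks' Coloring Theorem (Theorem~\ref{thm:brooks}) combined with the pigeonhole principle applied to color classes. The hypotheses of the statement are precisely tailored so that Brooks' theorem applies.

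First I would verify that $G$ meets the hypotheses of Brooks' theorem. By assumption, $G$ is connected and $G \ne K_n$, so $G$ is not a complete graph. Moreover, $\Delta(G) \ge 3$, while every odd cycle has maximum degree~$2$; hence $G$ is not an odd cycle either. Brooks' theorem therefore yields $\chi(G) \le \Delta(G) = \Delta$.

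Next I would use a proper coloring achieving this bound. Fix a proper coloring of $G$ using at most $\Delta$ colors, and let $V_1, V_2, \ldots, V_\Delta$ denote the resulting color classes (some possibly empty). Each $V_i$ is an independent set of $G$, and these sets partition $V(G)$, so $\sum_{i=1}^{\Delta} |V_i| = n$. By the pigeonhole principle, some color class $V_j$ satisfies $|V_j| \ge n/\Delta$. Since $V_j$ is independent, we conclude
\[
\alpha(G) \ge |V_j| \ge \frac{n}{\Delta},
\]
which is the desired bound.

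There is no substantive obstacle here: the statement is essentially a textbook corollary of Brooks' theorem, and the only small point requiring attention is the verification that the excluded cases of Brooks' theorem (complete graphs and odd cycles) are ruled out by the hypotheses $G \ne K_n$ and $\Delta \ge 3$.
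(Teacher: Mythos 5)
Your proof is correct and follows essentially the same route as the paper: apply Brooks' theorem to get $\chi(G) \le \Delta$ and then use the pigeonhole principle on the color classes to extract an independent set of size at least $n/\Delta$. The only difference is that you explicitly verify the odd-cycle exclusion via $\Delta \ge 3$, a detail the paper leaves implicit.
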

\proof As a consequence of Theorem~\ref{thm:brooks}, if $G \ne K_n$ is a connected graph of order~$n$ with maximum degree~$\Delta \ge 3$, then the chromatic number of $G$ is at most~$\Delta$, that is, $\chi(G) \le \Delta$. Alternatively, viewing a $\Delta$-coloring of $G$ as a partitioning of its vertices into $\Delta$ independent sets, called color classes, we infer by the Pigeonhole Principle that $G$ contains an independent set of cardinality at least~$n/\Delta$, implying that $\alpha(G) \ge n/\Delta$.~\QED

\medskip
The following upper bound on the independence number of a claw-free graph is given by several authors.

\begin{theorem}{\rm (\cite{LiVi-90,Faudree-92})}
\label{thm:upper-bound-alpha}
If $G$ is a claw-free graph of order~$n$ with minimum degree~$\delta$, then
\[
\alpha(G) \le \left( \frac{2}{\delta + 2} \right) n.
\]
\end{theorem}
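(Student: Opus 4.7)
The plan is to take a maximum independent set $I \subseteq V(G)$ with $|I|=\alpha(G)$ and do a standard double count of the edges between $I$ and its complement $\overline{I} = V(G)\setminus I$. Since $I$ is independent, every edge incident with a vertex of $I$ has its other endpoint in $\overline{I}$, so the number of edges between $I$ and $\overline{I}$ is exactly $\sum_{v\in I}\deg_G(v)$.

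First I would obtain a lower bound on this edge count: using that the minimum degree is $\delta$, each vertex of $I$ contributes at least $\delta$ edges to $\overline{I}$, so the number of edges between $I$ and $\overline{I}$ is at least $\delta\cdot\alpha(G)$.

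Next I would obtain an upper bound by bounding, for each $v\in\overline{I}$, the number of neighbors of $v$ that lie in $I$. Here the claw-free hypothesis is the key input: if some $v\in\overline{I}$ had three neighbors $a,b,c\in I$, then, since $I$ is independent, the vertices $a,b,c$ are pairwise non-adjacent, so $G[\{v,a,b,c\}]$ would be an induced $K_{1,3}$, contradicting claw-freeness. Hence every vertex of $\overline{I}$ has at most two neighbors in $I$, which gives at most $2|\overline{I}| = 2(n-\alpha(G))$ edges between $I$ and $\overline{I}$.

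Combining the two bounds yields $\delta\cdot\alpha(G)\le 2(n-\alpha(G))$, i.e., $(\delta+2)\alpha(G)\le 2n$, which rearranges to the claimed inequality $\alpha(G)\le \left(\tfrac{2}{\delta+2}\right)n$. There is no real obstacle here; the only subtle point is making sure the claw argument is applied with $v$ as the center and the three independent neighbors as the leaves, and that we are using the minimum degree (not $\delta(G[I])$, which is $0$) on the $I$-side of the count.
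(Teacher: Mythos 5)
Your proof is correct: the double count of edges between a maximum independent set $I$ and its complement, using the minimum degree on the $I$-side and the claw-free condition (with $v$ as the center of a would-be $K_{1,3}$) to cap each outside vertex at two neighbors in $I$, gives $\delta\,\alpha(G)\le 2\bigl(n-\alpha(G)\bigr)$ and hence the stated bound. Note that the paper does not prove this theorem at all --- it is quoted from the literature (\cite{LiVi-90,Faudree-92}) --- so there is no in-paper argument to compare against; your argument is the standard proof of this inequality.
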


As a consequence of Theorems~\ref{thm:lower-bound-alpha} and~\ref{thm:upper-bound-alpha}, we have the following bounds on the independence number of a claw-free graph.

\begin{theorem}
\label{thm:bounds-alpha}
If $G \ne K_{n}$ is a connected, claw-free graph of order~$n$ with minimum degree~$\delta$ and maximum degree~$\Delta \ge 3$, then
\[
\frac{n}{\Delta} \le \alpha(G) \le \left( \frac{2}{\delta + 2} \right) n.
\]
\end{theorem}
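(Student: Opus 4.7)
The plan is to observe that this theorem is nothing more than the conjunction of the two preceding results, both of whose hypotheses are satisfied by any graph $G$ meeting the stated assumptions. The strategy is to verify, one inequality at a time, that the hypotheses of the relevant theorem are met and then quote its conclusion.

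First I would handle the lower bound. Since $G$ is assumed to be a connected graph with $G \ne K_n$ and maximum degree $\Delta \ge 3$, the hypotheses of Theorem~\ref{thm:lower-bound-alpha} hold verbatim, so $\alpha(G) \ge n/\Delta$ immediately. Next I would handle the upper bound: since $G$ is claw-free with minimum degree $\delta$, Theorem~\ref{thm:upper-bound-alpha} applies directly and yields $\alpha(G) \le \left( \tfrac{2}{\delta+2} \right) n$. Chaining the two inequalities gives the stated sandwich bound.

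There is essentially no obstacle here, as the theorem is a one-line corollary of the two results quoted immediately before its statement. The only thing to be careful about is noting that the hypothesis $G \ne K_n$ (needed to invoke Brooks' theorem via Theorem~\ref{thm:lower-bound-alpha}) is explicitly part of the assumption, while the claw-free hypothesis (needed for Theorem~\ref{thm:upper-bound-alpha}) is likewise assumed; neither theorem requires anything beyond what is already supposed. No extra argument, case analysis, or computation is needed, and the proof can be closed with \QED after a couple of sentences.
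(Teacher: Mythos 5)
Your proposal is correct and matches the paper exactly: the paper presents this theorem as an immediate consequence of Theorems~\ref{thm:lower-bound-alpha} and~\ref{thm:upper-bound-alpha}, with no further argument needed. Your verification that the hypotheses of each cited result are satisfied by the stated assumptions is precisely the (implicit) content of the paper's derivation.
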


As a consequence of Proposition~\ref{prop:relate} and Theorem~\ref{thm:bounds-alpha}, we have the following result.

\begin{theorem}
\label{thm:bounds-alpha-cubic}
If $G \ne K_4$ is a connected, claw-free, cubic graph of order~$n$, then the following properties hold. \\ [-22pt]
\begin{enumerate}
\item[{\rm (a)}] $\frac{1}{3}n  \le \alpha(G) \le   \frac{2}{5}n$. \1
\item[{\rm (b)}] $\frac{3}{5}n  \le m(G,3) \le  \frac{2}{3}n$.
\end{enumerate}
\end{theorem}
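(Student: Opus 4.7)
The plan is to obtain both statements as direct consequences of the two preceding results, with essentially no additional work beyond substituting $\delta = \Delta = 3$ and applying the identity $m(G,3) = n - \alpha(G)$.

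First I would establish part (a). Since $G$ is a connected cubic graph with $G \ne K_4$, the graph $G$ is not complete, and moreover $\delta(G) = \Delta(G) = 3 \ge 3$. Claw-freeness is given. Thus Theorem~\ref{thm:bounds-alpha} applies, yielding
\[
\frac{n}{3} \;=\; \frac{n}{\Delta(G)} \;\le\; \alpha(G) \;\le\; \frac{2}{\delta(G)+2}\,n \;=\; \frac{2}{5}\,n,
\]
which is exactly the inequality in (a).

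For part (b), I would apply Proposition~\ref{prop:relate} with $r = 3$, since any cubic graph is $3$-regular. This gives $m(G,3) = n - \alpha(G)$. Substituting the bounds from part (a) and reversing the inequalities,
\[
\frac{3}{5}\,n \;=\; n - \frac{2}{5}\,n \;\le\; n - \alpha(G) \;=\; m(G,3) \;\le\; n - \frac{n}{3} \;=\; \frac{2}{3}\,n,
\]
which proves (b). There is no genuine obstacle here: the entire proof is two chains of inequalities, and the only small thing to verify is that the hypotheses of Theorem~\ref{thm:bounds-alpha} (connectedness, claw-freeness, $G \ne K_n$, $\Delta \ge 3$) are met, which follows immediately from the hypotheses of the present theorem together with the observation that a cubic graph on more than $4$ vertices is not complete.
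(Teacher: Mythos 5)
Your proposal is correct and matches the paper exactly: the paper derives this theorem as an immediate consequence of Theorem~\ref{thm:bounds-alpha} (specialized to $\delta=\Delta=3$, noting that the only complete cubic graph is $K_4$) and Proposition~\ref{prop:relate} (giving $m(G,3)=n-\alpha(G)$), which is precisely your two chains of inequalities.
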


We show next that the bounds of Theorem~\ref{thm:bounds-alpha-cubic} are tight (in the sense that they hold for connected graphs of arbitrarily large orders). Suppose that $G \in \cT_\cub$. Thus, $G$ is a triangle-necklace $F_{2k}$ for some $k \ge 1$, and so $G$ has order~$n = 6k$ and contains~$2k$ vertex disjoint triangles. Moreover, every unit of the (unique) $\Delta$-D-partition of $G$ is a triangle-unit, implying that $\alpha(G) \le \frac{1}{3}n$. By Theorem~\ref{thm:bounds-alpha-cubic}(a), $\alpha(G) \ge \frac{1}{3}n$. Consequently, $\alpha(G) = \frac{1}{3}n$. For example, the white vertices in the triangle-necklace $F_6$ of order~$n = 18$ shown in Figure~\ref{fig:Tneck1} form an $\alpha$-set of $F_6$ (of cardinality~$6= \frac{1}{3}n$). By Proposition~\ref{prop:relate}, we infer that $m(G,3) = \frac{2}{3}n$, and the corresponding set of shaded vertices in Figure~\ref{fig:Tneck1} is a $\beta$-set of $G$. This shows that the lower bound of Theorem~\ref{thm:bounds-alpha-cubic}(a) is tight, as is the upper bound of Theorem~\ref{thm:bounds-alpha-cubic}(b).

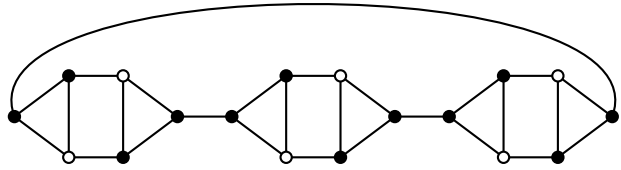
\begin{figure}[htb]
\begin{center}
\begin{tikzpicture}[scale=.85,style=thick,x=.85cm,y=.85cm]
\def\vr{2.5pt} 
\path (2,0.75) coordinate (a);
\path (3,0) coordinate (b);
\path (3,1.5) coordinate (c);
\path (4,0) coordinate (d);
\path (4,1.5) coordinate (e);
\path (5,0.75) coordinate (f);
\path (6,0.75) coordinate (g);
\path (7,0) coordinate (h);
\path (7,1.5) coordinate (i);
\path (8,0) coordinate (j);
\path (8,1.5) coordinate (k);
\path (9,0.75) coordinate (l);
\path (10,0.75) coordinate (m);
\path (11,0) coordinate (n);
\path (11,1.5) coordinate (o);
\path (12,0) coordinate (p);
\path (12,1.5) coordinate (q);
\path (13,0.75) coordinate (r);
\draw (b)--(a)--(c)--(b)--(d)--(e)--(f)--(d);
\draw (c)--(e);
\draw (f)--(g);
\draw (i)--(g)--(h)--(i)--(k)--(l)--(j)--(k);
\draw (n)--(m)--(o)--(n)--(p)--(r)--(q)--(p);
\draw (h)--(j);
\draw (l)--(m);
\draw (o)--(q);
%
\draw (a) to[out=110,in=70, distance=2.5cm] (r);
\draw (a) [fill=black] circle (\vr);
\draw (b) [fill=white] circle (\vr);
\draw (c) [fill=black] circle (\vr);
\draw (d) [fill=black] circle (\vr);
\draw (e) [fill=white] circle (\vr);
\draw (f) [fill=black] circle (\vr);
\draw (g) [fill=black] circle (\vr);
\draw (h) [fill=white] circle (\vr);
\draw (i) [fill=black] circle (\vr);
\draw (j) [fill=black] circle (\vr);
\draw (k) [fill=white] circle (\vr);
\draw (l) [fill=black] circle (\vr);
\draw (m) [fill=black] circle (\vr);
\draw (n) [fill=white] circle (\vr);
\draw (o) [fill=black] circle (\vr);
\draw (p) [fill=black] circle (\vr);
\draw (q) [fill=white] circle (\vr);
\draw (r) [fill=black] circle (\vr);
\end{tikzpicture}
\end{center}
\vskip -0.35cm
\caption{A $\beta$-set in a triangle-necklace $F_6$} \label{fig:Tneck1}
\end{figure}

Suppose next that $G \in \cH_\cub$. Thus, $G$ is a triangle-diamond-necklace $H_{2k}$ for some $k \ge 2$, and so $G$ has order~$n = 10k$. We can choose an independent set of $G$ to contain one vertex from every triangle-unit and two vertices from every diamond-unit, implying that $\alpha(G) \ge 4k = \frac{2}{5}n$. For example, the white vertices in the triangle-diamond-necklace $H_6$ of order~$n = 30$ shown in Figure~\ref{fig:TDneck1} form an $\alpha$-set of $H_6$ (of cardinality~$12= \frac{2}{5}n$). By Proposition~\ref{prop:relate}, we infer that $m(G,3) = \frac{3}{5}n$, and the corresponding set of shaded vertices in Figure~\ref{fig:TDneck1} is a $\beta$-set of $H_6$ . This shows that the upper bound of Theorem~\ref{thm:bounds-alpha-cubic}(a) is tight, as is the lower bound of Theorem~\ref{thm:bounds-alpha-cubic}(b).

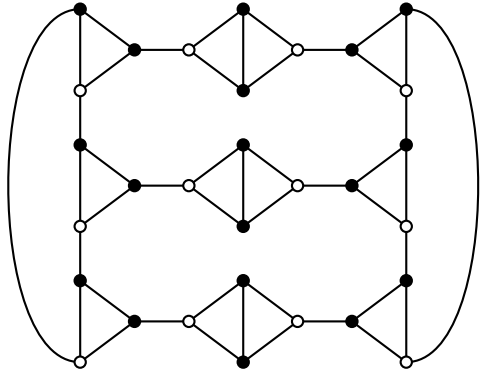
\begin{figure}[htb]
\begin{center}
\begin{tikzpicture}[scale=.85,style=thick,x=.85cm,y=.85cm]
\def\vr{2.5pt} 
\path (4,0) coordinate (d);
\path (4,1.5) coordinate (e);
\path (5,0.75) coordinate (f);
\path (6,0.75) coordinate (g);
\path (7,0) coordinate (h);
\path (7,1.5) coordinate (i);

\path (8,0.75) coordinate (l);
\path (9,0.75) coordinate (m);
\path (10,0) coordinate (n);
\path (10,1.5) coordinate (o);
\path (4,2.5) coordinate (d1);
\path (4,4) coordinate (e1);
\path (5,3.25) coordinate (f1);
\path (6,3.25) coordinate (g1);
\path (7,2.5) coordinate (h1);
\path (7,4) coordinate (i1);

\path (8,3.25) coordinate (l1);
\path (9,3.25) coordinate (m1);
\path (10,2.5) coordinate (n1);
\path (10,4) coordinate (o1);
\path (4,5) coordinate (d2);
\path (4,6.5) coordinate (e2);
\path (5,5.75) coordinate (f2);
\path (6,5.75) coordinate (g2);
\path (7,5) coordinate (h2);
\path (7,6.5) coordinate (i2);

\path (8,5.75) coordinate (l2);
\path (9,5.75) coordinate (m2);
\path (10,5) coordinate (n2);
\path (10,6.5) coordinate (o2);
\draw (d)--(e)--(f)--(d);
\draw (f)--(g);
\draw (i)--(g)--(h)--(i)--(l)--(h);
\draw (n)--(m)--(o)--(n);

\draw (l)--(m);
\draw (d1)--(e1)--(f1)--(d1);
\draw (f1)--(g1);
\draw (i1)--(g1)--(h1)--(i1)--(l1)--(h1);
\draw (n1)--(m1)--(o1)--(n1);

\draw (l1)--(m1);
\draw (d2)--(e2)--(f2)--(d2);
\draw (f2)--(g2);
\draw (i2)--(g2)--(h2)--(i2)--(l2)--(h2);
\draw (n2)--(m2)--(o2)--(n2);

\draw (l2)--(m2);
\draw (e)--(d1);
\draw (e1)--(d2);
\draw (o)--(n1);
\draw (o1)--(n2);
\draw (d) to[out=180,in=180, distance=1.5cm] (e2);
\draw (n) to[out=0,in=0, distance=1.5cm] (o2);
\draw (d) [fill=white] circle (\vr);
\draw (e) [fill=black] circle (\vr);
\draw (f) [fill=black] circle (\vr);
\draw (g) [fill=white] circle (\vr);
\draw (h) [fill=black] circle (\vr);
\draw (i) [fill=black] circle (\vr);
\draw (l) [fill=white] circle (\vr);
\draw (m) [fill=black] circle (\vr);
\draw (n) [fill=white] circle (\vr);
\draw (o) [fill=black] circle (\vr);
\draw (d1) [fill=white] circle (\vr);
\draw (e1) [fill=black] circle (\vr);
\draw (f1) [fill=black] circle (\vr);
\draw (g1) [fill=white] circle (\vr);
\draw (h1) [fill=black] circle (\vr);
\draw (i1) [fill=black] circle (\vr);
\draw (l1) [fill=white] circle (\vr);
\draw (m1) [fill=black] circle (\vr);
\draw (n1) [fill=white] circle (\vr);
\draw (o1) [fill=black] circle (\vr);
\draw (d2) [fill=white] circle (\vr);
\draw (e2) [fill=black] circle (\vr);
\draw (f2) [fill=black] circle (\vr);
\draw (g2) [fill=white] circle (\vr);
\draw (h2) [fill=black] circle (\vr);
\draw (i2) [fill=black] circle (\vr);
\draw (l2) [fill=white] circle (\vr);
\draw (m2) [fill=black] circle (\vr);
\draw (n2) [fill=white] circle (\vr);
\draw (o2) [fill=black] circle (\vr);
\end{tikzpicture}
\end{center}
\vskip -0.35cm
\caption{A $\beta$-set in a triangle-diamond-necklace $H_6$} \label{fig:TDneck1}
\end{figure}

Note that $m(G,3)$ in (claw-free) cubic graphs coincides with $\sigma_{(3,q)}(G)$ for all $q\ge 3$, establishing the $(3,3)$-entry of Table~\ref{Tabela1}. Hence, by Proposition~\ref{prop:relate}, we have $\sigma_{(3,q)}(G) = m(G,3) = \beta(G) = n(G) - \alpha(G)$ for all $q \ge 3$. The result can be extended to $\sigma_{(3,2)}(G)$, which yields the following result.

\begin{proposition}
\label{prop:sigma32}
If $G$ is a connected, claw-free cubic graph of order~$n$, then $\sigma_{(3,q)}(G) = \beta(G)$ for every $q\ge 2$.
\end{proposition}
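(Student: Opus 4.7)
The plan is to reduce to the single case $q=2$. Since $G$ is cubic, any vertex has at most $3$ white neighbors, so for $q \ge 3$ the second condition in the $(3,q)$-spreading rule is vacuous and $(3,q)$-spreading coincides with $3$-percolation. Hence by Proposition~\ref{prop:relate}, $\sigma_{(3,q)}(G) = m(G,3) = \beta(G)$ for every $q \ge 3$. Applying Observation~\ref{obs:basic} then gives
\[
\sigma_{(3,2)}(G) \;\ge\; \sigma_{(3,3)}(G) \;=\; \beta(G),
\]
so only the matching upper bound $\sigma_{(3,2)}(G) \le \beta(G)$ remains.

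For the upper bound, I would take an arbitrary minimum vertex cover $S$ of $G$ and show that $S$ is itself a $(3,2)$-spreading set. Suppose for contradiction it is not. Let $B^{*} \subsetneq V(G)$ be the blue set when the $(3,2)$-process starting from $S$ terminates, and let $W^{*} = V(G)\setminus B^{*} \ne \emptyset$. Since $W^{*}\subseteq V(G)\setminus S$ and $V(G)\setminus S$ is independent, $W^{*}$ is independent, so every $w \in W^{*}$ has all three neighbors already in $B^{*}$. Consequently the only possible reason that the $(3,2)$-rule fails at $w$ is that \emph{every} blue neighbor $u$ of $w$ has strictly more than two white neighbors, i.e.\ all three neighbors of $u$ lie in $W^{*}$.

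Setting $T = N_G(W^{*}) \subseteq B^{*}$, this shows that every $u \in T$ satisfies $N_G(u) \subseteq W^{*}$, while by definition every $w \in W^{*}$ satisfies $N_G(w) \subseteq T$. Thus $G[W^{*}\cup T]$ is a $3$-regular bipartite subgraph with bipartition $(W^{*},T)$; as $G$ is itself $3$-regular, no edge of $G$ leaves $W^{*}\cup T$, and since $G$ is connected we get $V(G) = W^{*}\cup T$, so $G$ is bipartite. But a bipartite cubic graph cannot be claw-free: the three neighbors of any vertex lie in the opposite part and are therefore pairwise nonadjacent, inducing a $K_{1,3}$. This contradiction forces $W^{*}=\emptyset$, proving $|S|=\beta(G)$ is a $(3,2)$-spreading set.

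The main step is the structural observation that, under the $(3,2)$-rule, failure to force some $w \in W^{*}$ pins down \emph{every} blue neighbor of $w$ to have all three of its own neighbors uncolored; once this rigidity is in hand, the bipartite-versus-claw-free clash finishes the argument immediately. The rest is routine: the reduction to $q=2$ is immediate from cubicity, and the lower bound is a direct application of Observation~\ref{obs:basic} together with the $q=\infty$ case already given by Proposition~\ref{prop:relate}.
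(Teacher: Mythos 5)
Your proposal is correct, and the framing (reduce everything to the single case $q=2$: cubicity kills the $q\ge 3$ condition, Observation~\ref{obs:basic} gives the lower bound, and the content is showing a minimum vertex cover is a $(3,2)$-spreading set) matches the paper exactly. Where you genuinely diverge is in how that last upper-bound step is proved. The paper argues directly and locally: by Lemma~\ref{lem:known} every vertex of a connected claw-free cubic graph lies in a triangle, a vertex cover must contain at least two vertices of each triangle, so every cover vertex has a cover neighbor and hence at most two white neighbors --- consequently every white vertex is forced immediately and the process finishes in a single round. You instead run a stalling argument: if the process halts with $W^*\neq\emptyset$, then $W^*$ is independent (so the $p=3$ condition always holds), the failure of the $q=2$ condition pins every vertex of $T=N_G(W^*)$ to have all three neighbors in $W^*$, and $G[W^*\cup T]$ is a spanning $3$-regular bipartite subgraph, whence $G$ is bipartite --- impossible for a claw-free cubic graph. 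Both arguments are sound. The paper's buys an explicit one-round forcing description and leans on the $\Delta$-D-partition machinery already set up; yours is slightly more general, since claw-freeness enters only to exclude bipartiteness, so your proof in fact shows that in \emph{any} connected non-bipartite cubic graph every vertex cover is a $(3,2)$-spreading set. The only point worth double-checking in your write-up is the (correct) observation that $W^*\subseteq V(G)\setminus S$ because the blue set only grows from $S$; with that in place the argument is complete.
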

\proof We have already established that $\sigma_{(3,q)}(G) = \beta(G)$ for every $q\ge 3$, so it remains to resolve the case $q=2$. Consider a $3$-percolating set $P$ of a connected, claw-free cubic graph $G$, and color all its vertices blue. By Proposition~\ref{prop:relate}, $P$ is a vertex cover and so $V(G)\setminus P$ is an independent set in $G$. Now, since an independent set in $G$ contains at most one vertex from each triangle, a vertex cover contains at least two vertices in each triangle. This implies that every vertex in $P$ has a neighbor in $P$. Therefore, 
since $\deg_G(x)=3$ for all $x\in V(G)$, we infer that every (blue) vertex in $P$ has at most two (white) neighbors in $V(G)\setminus P$. Thus, $P$ is a $(3,2)$-spreading set. Since this is true for every $3$-percolating set $P$ of $G$, we infer that $\sigma_{(3,2)}(G) \le m(G,3)=\beta(G)$. 

Conversely, by Observation~\ref{obs:basic},
$\sigma_{(3,2)}(G)\ge \sigma_{(3,3)}(G)$. Combining the obtained inequalities, we infer $$\beta(G)\ge \sigma_{(3,2)}(G)\ge \sigma_{(3,3)}(G)=m(G,3)=\beta(G).~~~\textrm{\QED}$$

\medskip
We remark that $\sigma_{(3,1)}(G)>\beta(G)$ if $G \in \cH_\cub$ is a triangle-diamond necklace. Indeed, in any vertex cover $P$ of $G$, every vertex in $P$ has a neighbor in $P$, and so it does not have at most one neighbor in $V(G)\setminus P$.

Recall that two units in the $\Delta$-D-partition are adjacent if there exists at least one edge joining a vertex in one unit to a vertex in the other unit. We say that a unit in the $\Delta$-D-partition of $G$ is \emph{infected} if all vertices in the unit are infected.

\begin{lemma}
\label{lem:independent set-1}
If $G$ is a connected, claw-free cubic graph that contains only triangle-units, then there exists an independent set in $G$ that contains a vertex from every triangle of~$G$.
\end{lemma}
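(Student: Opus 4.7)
The plan is to apply Brooks' coloring theorem (Theorem~\ref{thm:brooks}) directly. First I would check its hypotheses on $G$. The graph $G$ is connected by assumption and cubic, so it is certainly not an odd cycle. The only cubic complete graph is $K_4$, and $K_4$ admits no $\Delta$-D-partition at all; since $G$ has a $\Delta$-D-partition (into triangle-units), we must have $G\ne K_4$. Brooks' theorem therefore yields $\chi(G)\le \Delta(G)=3$, and because $G$ contains at least one triangle-unit, $\chi(G)\ge 3$. Hence $\chi(G)=3$.

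Next, I would fix any proper $3$-coloring of $G$ with color classes $C_1,C_2,C_3$. Since each triangle-unit is isomorphic to $K_3$, its three vertices must receive three distinct colors, so each $C_j$ contains exactly one vertex of every triangle-unit. Under the hypothesis that the $\Delta$-D-partition has only triangle-units, every vertex of $G$ has exactly one neighbor outside its own unit; an elementary degree count then shows that every triangle of $G$ must be entirely contained in a single unit, and is therefore itself a triangle-unit. Consequently, $C_1$ is an independent set that meets every triangle of $G$ in exactly one vertex, as required.

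The only delicate point in this approach is verifying that Brooks' theorem applies, which amounts to ruling out the case $G=K_4$; this is immediate from the partition hypothesis. Beyond that, no structural analysis of claw-free cubic graphs (diamond-necklaces, triangle-necklaces, etc.) is needed, making the argument essentially a one-line consequence of Brooks' theorem.
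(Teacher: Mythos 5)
Your proof is correct and rests on the same key ingredient as the paper's: Brooks' theorem applied to the connected, non-complete cubic graph $G$. The paper packages this as the lower bound $\alpha(G)\ge \frac{1}{3}n$ (Theorem~\ref{thm:bounds-alpha-cubic}(a), itself obtained from Brooks' theorem via the color classes) and then forces a maximum independent set to meet every triangle-unit by a counting argument, whereas you read the conclusion directly off a single color class of a proper $3$-coloring; the two arguments are essentially identical, and your verification that every triangle of $G$ is a triangle-unit is sound.
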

\proof 
By supposition, every unit in the $\Delta$-D-partition of the connected, claw-free cubic $G$ is a triangle-unit. Since $G$ is diamond-free, we note that the triangle-units in $G$ correspond to the triangles in $G$. The graph $G$ has order~$n = 3t$ where $t$ denotes the number of triangle-units in $G$. By Theorem~\ref{thm:bounds-alpha-cubic}(a), we have $t = \frac{1}{3}n  \le \alpha(G)$. Since every independent set in $G$ contains at most one vertex from every triangle-unit in $G$, we infer that $\alpha(G) \le t = \frac{1}{3}n$. Consequently, $\alpha(G) = \frac{1}{3}n = t$ and every maximum independent set in $G$ contains a vertex from every triangle of $G$.~\QED

\begin{lemma}
\label{lem:independent set-2}
If $G$ is a connected, claw-free cubic graph, then there exists an independent set in $G$ that contains a vertex from every triangle of $G$.
\end{lemma}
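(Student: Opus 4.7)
The plan is to use the $\Delta$-D-partition of $V(G)$ from Lemma~\ref{lem:known} together with Brooks' theorem. For every diamond-unit $D_i$ with vertex set $\{a_i,b_i,c_i,d_i\}$ and missing edge $a_ib_i$, I include the vertex $c_i$ in the independent set under construction. All three neighbors of $c_i$ lie inside $D_i$, so $c_i$ is non-adjacent to any vertex outside $D_i$; consequently these selected vertices create no mutual conflict and do not restrict the choices yet to be made for triangle-units. Moreover $c_i$ lies in both triangles $\{a_i,c_i,d_i\}$ and $\{b_i,c_i,d_i\}$ of $D_i$, so every triangle contained inside a diamond-unit is already covered.

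It then remains to choose one vertex from each triangle-unit so that the chosen vertices form an independent set. Let $G'$ be the subgraph of $G$ induced by the union of the triangle-unit vertices. Each vertex of $G'$ has exactly two internal neighbors in its own triangle-unit and at most one external neighbor in $G'$ (its third neighbor in $G$ may lie in a diamond-unit, which has been deleted), so $\Delta(G')\le 3$. Because each triangle-unit is connected in $G'$, every connected component of $G'$ is a disjoint union of whole triangle-units, and therefore has a number of vertices divisible by $3$; in particular, no component equals $K_4$. Applying Brooks' theorem (Theorem~\ref{thm:brooks}) componentwise then yields $\chi(G')\le 3$.

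By the pigeonhole principle applied to a proper $3$-coloring of $G'$, there exists an independent set $I'$ of $G'$ with $|I'|\ge |V(G')|/3 = t$, where $t$ denotes the number of triangle-units. Since each triangle-unit induces a triangle in $G'$, the set $I'$ contains at most one vertex from each triangle-unit, so $|I'|\le t$; equality therefore holds and $I'$ is a transversal of the triangle-units. Taking the union of $I'$ with the vertices $c_i$ previously selected from the diamond-units produces an independent set of $G$ meeting every triangle-unit and every triangle inside a diamond-unit. A short case analysis on the degree structure of units shows that any triangle of $G$ is contained in a single unit of the $\Delta$-D-partition, so this set hits every triangle of $G$. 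The main obstacle in the plan is the structural check required to apply Brooks' theorem to $G'$, namely ruling out $K_4$ components; once this divisibility-by-$3$ observation is made, the rest of the argument reduces to pigeonhole.
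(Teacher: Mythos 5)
Your proof is correct, and for the triangle-unit part it takes a genuinely different route from the paper. Both arguments handle diamond-units identically (select a dominating vertex, whose neighbors all lie inside its diamond, and observe that it lies in both triangles of the diamond). For the triangle-units, however, the paper uses a greedy peeling argument: after deleting the diamond-units it repeatedly picks a vertex of degree~$2$ in the remaining graph, adds it to the independent set, and deletes its triangle, checking that the structural invariants persist. You instead apply Brooks' theorem and the pigeonhole principle to the induced subgraph $G'$ on the triangle-unit vertices: since a colour class of a proper $3$-colouring of $G'$ has at most one vertex per triangle-unit, the largest class must be an exact transversal. This is arguably cleaner and mirrors the paper's own proof of Lemma~\ref{lem:independent set-1} (the all-triangle case, which also rests on Brooks via Theorem~\ref{thm:bounds-alpha-cubic}), extended to the possibly disconnected graph $G'$; the price is the structural check on the components of $G'$, which you correctly settle for $K_4$ via divisibility by $3$. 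Two small points of rigour: (i) a component of $G'$ can be a single triangle-unit, i.e.\ $K_3$, to which Brooks' theorem as stated in Theorem~\ref{thm:brooks} does not apply ($K_3$ is complete and an odd cycle); you still get $\chi(K_3)=3\le 3$ trivially, but this case should be named rather than folded into ``apply Brooks componentwise'' (odd cycles of length at least $5$ are excluded since every component contains a triangle). (ii) Like the paper's own statement and proof, your argument implicitly assumes $G\ne K_4$, since the $\Delta$-D-partition exists only then; note that the lemma as literally stated fails for $K_4$, whose independent sets are single vertices and hence miss the opposite triangle.
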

\proof 
If the connected, claw-free cubic $G$ that contains only triangle-units, then the result follows from Lemma~\ref{lem:independent set-1}. Hence we may assume that $G$ contains at least one diamond-unit. We now construct an independent set $I$ of $G$ as follows. Initially, we set $I = \emptyset$. For each diamond-unit $D$ of $G$, we add to the set $I$ exactly one of the two vertices of degree~$3$ in the diamond-unit $D$. Thereafter, we delete all diamond-units from the graph $G$. If $G$ contains only diamond-units, then the resulting set $I$ has the desired property that it contains a vertex from every triangle of $G$. Hence we may assume that $G$ contains at least one triangle-unit.

Let $G'$ be the graph obtained by deleting all diamond-units from $G$. We note that every component, $C$, of $G_1$ has the following properties: (1) the component $C$ contains no diamond, (2) every vertex in $C$ belongs to a triangle in the component $C$ and (3) the component $C$ has minimum degree~$2$ (and maximum degree at most~$3$). We now select an arbitrary vertex $u_1$ of degree~$2$ in $C$, add the vertex~$u_1$ to the set $I$, and delete the triangle that contains the vertex~$u_1$. In the resulting graph $G_2$, once again properties (1), (2) and (3) hold, and we select an arbitrary vertex $u_2$ of degree~$2$ in $G_2$, add the vertex~$u_2$ to the set $I$, and delete from $G_2$ the triangle that contains the vertex~$u_2$. Upon completion of this process, the resulting set $I$ is an independent set in $G$ that contains a vertex from every triangle of $G$.~\QED

We note that the statement in Lemma~\ref{lem:independent set-2} is equivalent to the following statement.

\begin{lemma}
\label{lem:independent set}
If $G$ is a connected, claw-free cubic graph, then there exists a vertex cover $P$ such that every triangle of $G$ contains exactly two vertices from $P$. 
\end{lemma}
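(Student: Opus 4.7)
The plan is to derive this lemma directly from Lemma~\ref{lem:independent set-2} by passing to complements; the two statements are essentially reformulations of each other, as the authors remark.

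Let $I$ be an independent set of $G$ containing a vertex from every triangle of $G$, whose existence is guaranteed by Lemma~\ref{lem:independent set-2}, and set $P = V(G)\setminus I$. I verify the two required properties in turn. First, since $I$ is independent, every edge of $G$ has at least one endpoint outside $I$, i.e.\ in $P$; hence $P$ is a vertex cover. Second, let $T$ be any triangle of $G$. Since $T$ is a clique on three vertices and $I$ is independent, $|T\cap I|\le 1$; on the other hand, by the defining property of $I$, we have $|T\cap I|\ge 1$. Therefore $|T\cap I|=1$ and, as $|T|=3$, we conclude $|T\cap P|=2$, which is exactly the conclusion of the lemma.

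For completeness, one can also observe the converse direction to confirm the equivalence claimed by the authors: if $P$ is a vertex cover with $|T\cap P|=2$ for every triangle $T$, then $I := V(G)\setminus P$ is independent (because $P$ covers every edge), and $|T\cap I|=|T|-|T\cap P|=1$ for every triangle $T$, so $I$ meets every triangle of $G$.

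There is no substantive obstacle here; all the actual work has already been done in the constructive argument of Lemma~\ref{lem:independent set-2}, and the present statement is just its complementary form, phrased in the language of vertex covers rather than independent sets. I do not expect any cases or technicalities beyond writing down this two-line equivalence.
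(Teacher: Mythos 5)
Your proof is correct and follows exactly the route the paper intends: the paper simply remarks that this lemma is the complementary reformulation of Lemma~\ref{lem:independent set-2}, and your argument (taking $P=V(G)\setminus I$, using independence of $I$ for the covering property and the clique/independence interaction for $|T\cap P|=2$) supplies the short verification the paper leaves implicit.
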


\begin{proposition}
\label{prp:s31}
If $G$ is a connected, claw-free cubic graph, then $\sigma_{(3,1)}(G) \le \beta(G) + 1$, and this bound is sharp.
\end{proposition}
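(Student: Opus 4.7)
The plan is to take any minimum vertex cover $P$ of $G$, pick any $v \in V(G) \setminus P$, and argue that $P' := P \cup \{v\}$ is a $(3,1)$-spreading set; this has cardinality $\beta(G) + 1$. The setup reuses two facts from the proof of Proposition~\ref{prop:sigma32}: $V(G) \setminus P$ is independent, so every white vertex starts with three blue neighbors; and every triangle of $G$ contains at least two vertices of $P$, so every $u \in P$ has at least one $P$-neighbor and thus at most two white neighbors initially. Once $v$ is colored blue, each of $v$'s three $P$-neighbors has one fewer white neighbor, hence at most one, so the remaining white neighbors of these three vertices are forced in the first round.

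The main work is to show the cascade eventually reaches every vertex, which I would do by contradiction. Suppose the process stabilizes with a nonempty set $S$ of remaining whites. Since no $w \in S$ can be forced, every $u \in N(S) \cap P$ must satisfy $|N(u) \cap S| \ge 2$; combined with $|N(u) \cap (V(G) \setminus P)| \le 2$, this forces $|N(u) \cap S| = 2$, meaning both of $u$'s non-$P$ neighbors lie in $S$. I would then inspect case by case which $P$-vertices can lie in $N(S) \cap P$: vertices in a triangle-unit containing three $P$-vertices, and vertices in a diamond-unit with three or four $P$-vertices, each have at most one non-$P$ neighbor and so are excluded, while vertices in a triangle-unit with exactly two $P$-vertices, and the two degree-$3$ vertices $c_i, d_i$ of a diamond-unit with only two $P$-vertices, can lie in $N(S) \cap P$. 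In each of these remaining cases the unique $P$-neighbor of $u$ (its triangle-partner, or the other of $c_i, d_i$) must itself lie in $N(S) \cap P$, for otherwise that partner would have at most one white neighbor and would force a vertex of $S$, contradicting stability. Consequently $C := S \cup (N(S) \cap P)$ is closed under edges of $G$, so it is a union of connected components. Since $G$ is connected and $v \in V(G) \setminus C$ (because $v \notin P$ and $v$ is blue, so $v \notin S$), the only possibility is $C = \emptyset$; hence $S = \emptyset$ and the cascade completes.

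The main obstacle is precisely the structural case analysis above: verifying that $N(S) \cap P$ contains only the expected ``bad'' $P$-vertices, and that the triangle- or diamond-partner of each such vertex cannot escape $N(S) \cap P$. For sharpness I would exhibit the triangle-diamond-necklace $H_{2k}$ with $k \ge 2$. A clean double-count suffices: for any minimum vertex cover $P$ of $H_{2k}$ one has $|P| = \beta(H_{2k}) = 6k$ and $|V(H_{2k}) \setminus P| = 4k$, so
\[
\sum_{u \in P} |N(u) \cap (V(H_{2k}) \setminus P)| \;=\; 3 \cdot 4k \;=\; 12k \;=\; 2|P|,
\]
and since each summand is at most $2$, every $u \in P$ has exactly two white neighbors. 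Therefore no white vertex can be forced from $P$ alone, giving $\sigma_{(3,1)}(H_{2k}) > \beta(H_{2k})$; combined with the upper bound, this yields $\sigma_{(3,1)}(H_{2k}) = \beta(H_{2k}) + 1$.
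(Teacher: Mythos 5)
Your proposal reaches the right conclusion but by a genuinely different route from the paper. The paper first invokes Lemma~\ref{lem:independent set} (built on Lemmas~\ref{lem:independent set-1} and~\ref{lem:independent set-2}) to choose a minimum vertex cover $P$ meeting every triangle in exactly two vertices, adds the third vertex of one chosen triangle, and then runs a constructive, unit-by-unit infection argument: a fully infected triangle has each of its vertices with at most one white neighbor, so it infects every adjacent unit, and connectivity finishes the job. You instead take an \emph{arbitrary} minimum vertex cover and an \emph{arbitrary} outside vertex $v$, and argue by contradiction that a stabilized nonempty white set $S$ forces $C=S\cup(N(S)\cap P)$ to be closed under neighborhoods, hence a union of components avoiding $v$, contradicting connectivity. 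Your route dispenses with the special-vertex-cover machinery and proves the stronger statement that every set of the form $P\cup\{v\}$ spreads; your double-counting proof of sharpness on $H_{2k}$ is also more complete than the paper's one-line remark, though you should state explicitly that by Observation~\ref{obs:basic} and Proposition~\ref{prop:relate} any $(3,1)$-spreading set is a vertex cover, so minimum vertex covers are the only size-$\beta$ candidates to rule out.

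One sub-claim in your case analysis is false as stated: in a diamond-unit $\{a,b,c,d\}$ (with $ab$ the missing edge) containing three vertices of $P$, it is not true that every vertex has at most one non-$P$ neighbor. If the excluded vertex is a dominating vertex, say $c\notin P$, then $a$ (and likewise $b$) has the two non-$P$ neighbors $c$ and its external neighbor. The exclusion you want still holds, but for a different reason: if $a\in N(S)\cap P$ at stabilization then $c\in S$, whence $d\in N(S)\cap P$; but $d$ has only the single non-$P$ neighbor $c$, so $d$ would force $c$, contradicting stability. In fact you can skip the exclusion step entirely: every $u\in N(S)\cap P$ has exactly two neighbors in $S$ and a unique $P$-neighbor $u'$, and a direct check of the three unit configurations (triangle-unit, dominating diamond vertex, non-dominating diamond vertex) shows $u'$ is always adjacent to one of those two $S$-neighbors, which is all the closure of $C$ requires. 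With that local repair the argument is complete and correct.
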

\proof
Let $G$ be a connected, claw-free cubic graph, and let $P$ be a $3$-percolating set of $G$ satisfying $|P| = m(G,3) = \beta(G)$. We note that $P$ is a vertex cover of $G$ and $V(G) \setminus P$ is a maximum independent set of $G$. Clearly, every vertex, which is not in $P$, has three neighbors in $P$, and so the first condition of the $(3,1)$-spreading rule is always satisfied with respect to $P$. In addition, due to Lemma~\ref{lem:independent set} we may assume that $P$ contains exactly two vertices from every triangle in $G$, or, equivalently, so that the independent set $V(G) \setminus P$ contains a vertex from every triangle in $G$. Now, consider an arbitrary triangle $T$ in $G$, and let $P^* = P \cup \{v\}$, where $v$ is the unique vertex in $V(T) \setminus P$. Let the vertices of $P^*$ be initially infected, and note that (with respect to $P^*$) the triangle $T$ is completely infected. 

Let $u$ be a vertex that is not yet infected and is adjacent to an infected triangle $T'$ (with all three vertices of $T'$ infected). In particular, we note that $u \notin P^*$. Suppose firstly that $u$ is adjacent to two vertices of $T'$. Thus, the vertices in $V(T') \cup \{u\}$ induce a diamond-unit. In this case, the vertex $u$ immediately becomes infected, since both of its neighbors in $T'$ have no other uninfected neighbor. As a result, the (unique) triangle containing the vertex~$u$ becomes an infected triangle, and so the diamond-unit containing $u$ is infected. Suppose next that $u$ is adjacent to exactly one vertex $w'$ of $T'$. Once again in this case, the vertex $u$ immediately becomes infected, since its neighbor $w'$ in $T'$ has no other uninfected neighbor. 
On the other hand, suppose that $u$ is infected and is adjacent to an infected triangle $T'$ (with all three vertices of $T'$ infected), and there exists an uninfected neighbor $v$ of $u$ that belongs to the same unit as $u$. Then $v$ becomes infected, because $u$ has only one uninfected neighbor, namely $v$. 

In all of the above cases, the unit, which is adjacent to an infected unit, becomes infected. 
Since $G$ is connected, and initially there is a triangle infected (making the unit in which the triangle lies also infected), we infer that due to the above arguments all vertices in $G$ become infected.
Thus, $\sigma_{(3,1)}(G) \le |P^*| = |P| + 1 = \beta(G) + 1$, thereby proving the desired upper bound. The case of a triangle-diamond necklace $G \in \cH_\cub$ shows that the bound is sharp.~\QED

\section{$2$-Percolation in claw-free, cubic graphs}
\label{sec:2-percolation}

In this section, we study $2$-percolation in claw-free, cubic graphs. It is easy to see that $m(K_4,2) = 2$. In what follows we therefore restrict our attention to $2$-percolation in connected, claw-free, cubic graphs $G$ different from~$K_4$. Thus, by Lemma~\ref{lem:known}, such a graph $G$ has a $\Delta$-D-partition (in which every unit is a triangle-unit or a diamond-unit). Recall that $u(G)$ is defined as the number of units in this (unique) $\Delta$-D-partition. In what follows, if $D$ is a diamond-unit in the $\Delta$-D-partition of $G$, then a \emph{dominating vertex} in $D$ is a vertex that is adjacent to the three other vertices in the diamond-unit $D$.

\begin{proposition}
\label{prop:2percolation-diamond-neck}
If $G \in \cN_\cub$, then $m(G,2) = u(G)+1$.
\end{proposition}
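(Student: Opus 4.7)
The plan is to prove matching bounds. For the upper bound $m(G,2)\le u(G)+1$, I would exhibit an explicit $2$-percolating set of size $k+1$, where $G=N_k$ so $u(G)=k$. For the lower bound $m(G,2)\ge u(G)+1$, I would combine Lemma~\ref{lem:subgraph condition} (which gives the weaker bound $m(G,2)\ge k$) with a local analysis of the diamonds that rules out the case $|S|=k$.

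For the upper bound, take $S=\{c_1,d_1\}\cup\{c_i\colon 2\le i\le k\}$, of size $k+1$. Each of $a_1,b_1$ has both $c_1$ and $d_1$ among its neighbors, so $a_1$ and $b_1$ become blue and $D_1$ is fully infected. Next, $b_2$ has blue neighbors $a_1$ (external) and $c_2$, so $b_2$ becomes blue; then $d_2$ has blue neighbors $b_2,c_2$; and then $a_2$ has blue neighbors $c_2,d_2$, so $D_2$ is fully infected. Iterating around the necklace (and, if one likes, simultaneously backwards from $b_1$ to $D_k$) infects all of $V(G)$.

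For the lower bound, observe that in each diamond $D_i$ the interior vertices $c_i,d_i$ have no neighbor outside $D_i$, while $a_i$ and $b_i$ each have exactly one external neighbor ($b_{i+1}$ and $a_{i-1}$, respectively). Hence every vertex of $D_i$ has fewer than two neighbors in $V(G)\setminus V(D_i)$, and Lemma~\ref{lem:subgraph condition} forces any $2$-percolating set $S$ to contain at least one vertex of each $D_i$, giving $|S|\ge k$. It remains to rule out $|S|=k$, i.e.\ exactly one chosen vertex $v_i\in V(D_i)$ for each $i$.

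The key local observation is that $a_i$ and $b_i$ are nonadjacent in $D_i$. If $v_i=a_i$, then the hypothetical first vertex of $\{b_i,c_i,d_i\}$ to be infected would need two blue neighbors at that moment, but the other two candidates in $\{b_i,c_i,d_i\}$ are still white, and $a_i$ is nonadjacent to $b_i$; a case check shows each candidate has at most one blue neighbor, a contradiction. By symmetry, $v_i=b_i$ is also impossible, so $v_i\in\{c_i,d_i\}$ for every $i$. In that configuration no interior vertex $d_j$ (or $c_j$, if $v_j=d_j$) can gain a second blue neighbor without some $a$- or $b$-vertex being blue; yet the hypothetical first $a_j$ or $b_j$ to be infected has at most one blue neighbor, namely $v_j\in\{c_j,d_j\}$, since its external neighbor is still white. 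Thus no vertex outside $S$ ever becomes blue, so $|S|=k$ fails and $m(G,2)\ge k+1$. The main obstacle is to write this last ``no first move'' step cleanly, verifying that under the $v_i\in\{c_i,d_i\}$-for-all-$i$ assumption every candidate for the first recolored vertex---interior or attachment---indeed has at most one blue neighbor at the supposed moment of its infection; once the local facts about a single diamond are recorded the global contradiction follows immediately.
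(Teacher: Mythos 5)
Your proof is correct and follows essentially the same route as the paper: the identical explicit $(k+1)$-set (two dominating vertices in one diamond, one in each other) for the upper bound, and the same lower-bound skeleton (one vertex per diamond via Lemma~\ref{lem:subgraph condition}, then ruling out $|S|=k$). The only cosmetic difference is that where you carry out the ``no first infected vertex'' analysis by hand once all chosen vertices are forced to be dominating, the paper instead applies Lemma~\ref{lem:subgraph condition} a second time to $H=G-S\cong C_{3k}$, each of whose vertices has exactly one neighbor in $S$.
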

\proof
Suppose that $G \in \cN_\cub$, and so $G$ is a diamond-necklace $N_k$ for some $k \ge 2$. Thus, $G$ has $u(G) = k$ units, each of which is a diamond-unit. Let $S$ be a $2$-percolating set of $G$ of minimum cardinality, and so $S$ is an $2$-percolating set of $G$ satisfying $|S| = m(G,2)$. By Lemma~\ref{lem:subgraph condition}, we infer that the set $S$ contains at least one vertex from every diamond-unit of $G$. Moreover if $D$ is a diamond-unit of $G$ and the set $S$ contains exactly one vertex from~$D$, then by Lemma~\ref{lem:subgraph condition} we infer that such a vertex of $S$ is a dominating vertex of $D$. In particular, since $S$ contains at least one vertex from every unit in the $\Delta$-D-partition of $G$, we note that $m(G,2) = |S| \ge u(G) = k$.

Suppose that $|S| = k$. By our earlier observations, the set $S$ contains exactly one vertex from every diamond-unit, namely a vertex from each diamond-unit that dominates that unit. The resulting set $S$ is a $2$-packing in $G$; that is, $d_G(u,v) \ge 3$ for every two distinct vertices $u$ and $v$ that belong to~$S$. Moreover in this case, letting $H = G - S$, we note that $H$ is a cycle $C_{3k}$ and every vertex in $H$ has exactly one neighbor that belongs to $V(G) \setminus V(H) = S$. Thus by Lemma~\ref{lem:subgraph condition}, we infer that every $2$-percolating set of $G$ contains at least one vertex of $H$. However this contradicts our supposition that $S$ is a $2$-percolating set of $G$ that contains no vertex of $H$. Hence, $m(G,2) = |S| \ge k+1$.

To establish an upper bound on $m(G,2)$ in this case when $G = N_k$, if $S^*$ consists of a dominating vertex from $k-1$ diamond-units and two dominating vertices from the remaining diamond-unit, then $S^*$ is a $2$-percolating set of $G$ and $|S^*| = k + 1$. For example, if $G = N_4$ (here $k = 4$), then such a set $S^*$ illustrated in Figure~\ref{fig:N4b} by the shaded vertices is a $2$-percolating set of $G$ and $|S^*| = 5$. This implies that $m(G,2) \le |S^*| = k + 1$. Consequently, $m(G,2) = k + 1$. Thus in this case when $G \in \cN_\cub$, we have shown that $m(G,2) = u(G) + 1$.~\QED

\begin{figure}[htb]
\begin{center}
\begin{tikzpicture}[scale=.85,style=thick,x=.85cm,y=.85cm]
\def\vr{2.5pt} 

%
\path (2.25,0.5) coordinate (a);
\path (2.25,2) coordinate (b);
\path (1.5,1.25) coordinate (c);
\path (3,1.25) coordinate (d);
\path (4.5,0.5) coordinate (w);
\path (4.5,2) coordinate (x);
\path (3.75,1.25) coordinate (y);
\path (5.25,1.25) coordinate (z);
\path (6.75,0.5) coordinate (k);
\path (6.75,2) coordinate (l);
\path (6,1.25) coordinate (m);
\path (7.5,1.25) coordinate (n);
\path (8.25,1.25) coordinate (h);
\path (9,0.5) coordinate (i);
\path (9,2) coordinate (j);
\path (9.75,1.25) coordinate (o);
\draw (a) -- (c);
\draw (a) -- (d);
\draw (b) -- (c);
\draw (b) -- (d);
\draw (a) -- (b);
%
\draw (d) -- (y);
\draw (y)--(w)--(z)--(w)--(x)--(y);
\draw (x)--(z);
\draw (h) -- (i);
\draw (h) -- (j);
\draw (i) -- (j);
\draw (k) -- (m);
\draw (k) -- (n);
\draw (l) -- (m);
\draw (l) -- (n);
\draw (k) -- (l);
\draw (m) -- (z);
\draw (n) -- (h);
\draw (i)--(o)--(j);
\draw (c) to[out=110,in=70, distance=2.5cm] (o);
\draw (a) [fill=black] circle (\vr);
\draw (b) [fill=black] circle (\vr);
\draw (c) [fill=white] circle (\vr);
\draw (d) [fill=white] circle (\vr);
\draw (h) [fill=white] circle (\vr);
\draw (i) [fill=white] circle (\vr);
\draw (j) [fill=black] circle (\vr);
\draw (k) [fill=white] circle (\vr);
\draw (l) [fill=black] circle (\vr);
\draw (m) [fill=white] circle (\vr);
\draw (n) [fill=white] circle (\vr);
\draw (w) [fill=white] circle (\vr);
\draw (x) [fill=black] circle (\vr);
\draw (y) [fill=white] circle (\vr);
\draw (z) [fill=white] circle (\vr);
\draw (o) [fill=white] circle (\vr);
%
\end{tikzpicture}
\end{center}
\vskip -0.35cm
\caption{A diamond-necklace $N_4$ and a $2$-percolating set of size~$5$} \label{fig:N4b}
\end{figure}
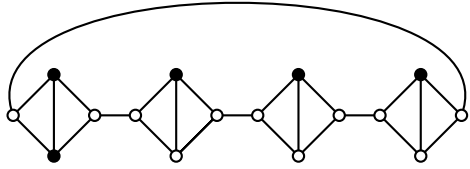

\medskip
 If two distinct units are joined by at least two edges, then we say that these two units are \emph{double-bonded}.

\begin{lemma}\label{lem:special triangle unit}
If $G \ne K_4$ is a connected, claw-free, cubic graph and $G\notin \cN_\cub$, then there exists a triangle-unit $T'$, such that the graph $G-T'$ has at most two components.
\end{lemma}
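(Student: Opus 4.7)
The plan is to work with the auxiliary \emph{unit multigraph} $U(G)$, whose vertices are the units of the (unique) $\Delta$-D-partition of $G$ and whose edges correspond to the edges of $G$ joining vertices in different units. In $U(G)$ every triangle-unit has degree~$3$ and every diamond-unit has degree~$2$, and $U(G)$ is connected because $G$ is. Moreover, since $G \notin \cN_\cub$, the multigraph $U(G)$ contains at least one triangle-unit; otherwise $U(G)$ would be $2$-regular and connected, hence a single cycle, forcing $G$ to be a diamond-necklace. The key observation driving the argument is that since every unit induces a connected subgraph of $G$, for any triangle-unit $T$ the graph $G-T$ has exactly as many components as $U(G)-T$.

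I would then assume for contradiction that for every triangle-unit $T$ the graph $G-T$ has exactly three components (the maximum possible, since $T$ has three external edges). This forces the three external edges of $T$ to go to three \emph{distinct} units, since otherwise $T$ has at most two neighbors in $U(G)$ and $U(G)-T$ has at most two components; in particular, no triangle-unit participates in a double-bond. Next I would argue that each external edge $TU_i$ of $T$ is a bridge of $U(G)$: if one removes only the edge $TU_i$ while keeping $T$, then the component of $U(G)-T$ that contains $U_i$ is already separated from the rest of $U(G)-T$, and the edge $TU_i$ was its only link to~$T$.

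It follows that every edge of $U(G)$ incident to a triangle-unit is a bridge, and so any cycle in $U(G)$ must avoid triangle-units entirely and use only diamond-units. But each diamond-unit has degree only~$2$ in $U(G)$, so such a cycle would exhaust both incident edges at each of its diamond-units, forming a connected component of $U(G)$ on its own. The connectivity of $U(G)$ together with the presence of a triangle-unit rules this out. Hence $U(G)$ is acyclic and therefore a tree. A tree on at least two vertices has a leaf, yet every vertex of $U(G)$ has degree~$2$ or~$3$; and $|V(U(G))|=1$ would force $G=K_4$, which is excluded. This contradiction yields the desired triangle-unit~$T'$.

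The main obstacle requiring care is the handling of loops and multi-edges in the multigraph $U(G)$, so that the tree-leaf argument is valid. Self-loops do not occur (every edge of a unit is internal to that unit); a double-bond at a triangle-unit would give that unit only two distinct neighbors in $U(G)$, contradicting the running assumption; and a double-bond between two diamond-units would itself form an entire connected component of $U(G)$, forcing $G = N_2 \in \cN_\cub$, again excluded. Thus under the running assumption $U(G)$ is genuinely simple, which legitimizes the concluding step that a nontrivial tree must have a leaf.
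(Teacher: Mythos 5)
Your proof is correct, and it takes a genuinely different route from the paper's. The paper also argues by contradiction from the hypothesis that every triangle-unit separates $G$ into three components, but it then picks a particular triangle-unit $T_1$ one of whose components $G_1$ contains no triangle-unit, and follows the forced chain of diamond-units $D_1,D_2,D_3,\dots$ inside $G_1$: each diamond-unit has only two external edges, so the chain can neither terminate nor close up, contradicting finiteness of $G$. You globalize the same degree observation instead of localizing it: under the contradiction hypothesis every edge of the unit multigraph incident to a triangle-unit is a bridge, any remaining cycle would consist solely of degree-$2$ diamond-units and hence be a whole component, so $U(G)$ is a tree; but a nontrivial tree has a leaf while every unit has degree $2$ or $3$. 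What your version buys is completeness: the paper's selection of a $T_1$ with a triangle-free component is asserted rather than justified (it needs, say, an extremal choice of $T_1$), whereas your tree argument requires no such selection, and you also dispose explicitly of the multigraph degeneracies (loops, double bonds at triangles, the $N_2$ case) that both arguments implicitly rely on. The paper's chain argument is shorter when read charitably; yours is the more airtight write-up.
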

\proof  Let $\mathcal{T}$ be the set of all triangle-units in $G$. Suppose to the contrary that for every $T\in \mathcal{T}$ the graph $G-T$ consists of three components. Let $T_1 \in \mathcal{T}$ be a triangle-unit such that at least one of the three components of the graph $G-T_1$ does not contain a triangle-unit. Denote this component by $G_1$ and let $D_1$ be the diamond-unit in $G_1$ adjacent to $T_1$. Then in $G_1$, $D_1$ is adjacent to exactly one diamond-unit $D_2$, which is in turn adjacent to another diamond-unit $D_3$ and so on, which yields a contradiction since $G$ is finite. Therefore $G_1$ contains a triangle-unit $T' \notin \mathcal{T}$, which is the final contradiction.~\QED

The following observation will be necessary for proving the subsequent theorem.

\begin{observation}\label{obs:adjacent units}
    Let $G \ne K_4$ be a connected, claw-free, cubic graph, and $U, V$ adjacent units in the $\Delta$-D-partition of $G$. Also, let $uv$ be an edge connecting units $U$ and $V$, where $u \in U$ and $v \in V$. If $u$ is infected and another vertex $v_1 \in V$, where $d(v_1,u)=2$, is also infected, then the whole unit $V$ becomes infected.
\end{observation}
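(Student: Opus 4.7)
The plan is to do a case analysis on the structure of $V$, which is either a triangle-unit or a diamond-unit by Lemma~\ref{lem:known}, and in each case trace the propagation of infection under the $2$-percolation rule.

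The preliminary structural observation is that in a cubic graph each vertex of a triangle-unit has exactly one neighbor outside the unit, while in a diamond-unit only the two boundary vertices have an outside neighbor (one each) and the two dominating vertices have none. Hence $u\in U$ has exactly one neighbor in $V$, namely $v$, and so no vertex of $V\setminus\{v\}$ is adjacent to $u$. The assumption $d(v_1,u)=2$ is therefore realized via the path $v_1-v-u$, forcing $v_1$ to be a neighbor of $v$ inside $V$.

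\emph{Case 1: $V=\{v,w_1,w_2\}$ is a triangle-unit.} Then $v_1\in\{w_1,w_2\}$; say $v_1=w_1$. The vertex $v$ has neighbors $u,w_1,w_2$, of which $u$ and $v_1=w_1$ are infected, so the $2$-percolation rule infects $v$. Then $w_2$ has two infected neighbors $v,w_1$ and also becomes infected, so all of $V$ is infected.

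\emph{Case 2: $V=\{a,b,c,d\}$ is a diamond-unit with missing edge $ab$.} Since $v$ has an outside neighbor $u$, it is a boundary vertex of $V$; without loss of generality $v=a$. Then $v_1$ is a neighbor of $a$ in $V$, so $v_1\in\{c,d\}$; without loss of generality $v_1=c$. The vertex $a$ has two infected neighbors $c,u$ and becomes infected; next $d$ has two infected neighbors $a,c$ and becomes infected; finally $b$ has two infected neighbors $c,d$ inside $V$ and becomes infected, completing the infection of $V$.

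The main obstacle is the initial structural step identifying $v_1$ as a neighbor of $v$ in $V$; once that is secured, the remainder is a short deterministic chain of $2$-percolation applications.
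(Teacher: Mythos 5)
Your two cases are correct and reproduce, in slightly expanded form, exactly the argument the paper gives: $v$ acquires the two infected neighbors $u$ and $v_1$, and the rest of $V$ follows in one step (triangle) or two steps (diamond). The only substantive difference is your ``preliminary structural observation,'' in which you try to justify the hidden hypothesis that $v_1$ is a neighbor of $v$; that justification contains a genuine gap. From the fact that $v$ is the unique neighbor of $u$ outside $U$ you may conclude that no vertex of $V\setminus\{v\}$ is adjacent to $u$, i.e.\ that $d(v_1,u)\ge 2$; you may \emph{not} conclude that a path of length~$2$ from $v_1$ to $u$ must pass through $v$. The middle vertex of such a path only needs to be a neighbor of $u$, and $u$ has two further neighbors inside $U$; if $U$ and $V$ are double-bonded, one of those neighbors of $u$ can be joined to a vertex of $V$ other than $v$.

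This is harmless when $V$ is a triangle-unit (there $v_1$ is adjacent to $v$ automatically), but it genuinely fails when $V$ is a diamond-unit: the only vertices of a diamond with outside neighbors are the two endpoints of the missing edge, so a double bond from a diamond-unit must use both of them, and they are non-adjacent. Concretely, let $V=\{a,b,c,d\}$ be a diamond with missing edge $ab$, double-bonded to a triangle-unit $U=\{u,u',u''\}$ by the edges $ua$ and $u'b$ (such configurations exist in connected claw-free cubic graphs, e.g.\ two copies of this triangle--diamond gadget joined at their free triangle vertices). Then $v=a$, and $v_1=b$ satisfies $d(b,u)=2$ via $u'$, yet $b$ is not adjacent to $a$; with only $u$ and $b$ infected, every vertex of $V$ has at most one infected neighbor and nothing spreads. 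So the adjacency of $v_1$ and $v$ is not merely unproved by your argument --- under the literal reading of ``$d(v_1,u)=2$'' it can fail, and with it the conclusion of the observation. The statement (and the paper's one-line proof, which silently assumes $v_1\in N_G(v)$) must be read with the stronger hypothesis that $v_1$ is a neighbor of $v$ in $V$, which is what holds in every application inside the proof of Theorem~\ref{thm:2percolation}; once that is granted, your case analysis goes through verbatim.
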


\begin{proof}
Vertex $v$ has two infected neighbors, namely $u$ and $v_1$. After that, if $V$ is a triangle unit, the remaining vertex of $V$ is also infected, and if $V$ is a diamond unit, the remaining two vertices of $V$ become infected after two steps. \QED
\end{proof}

\begin{theorem}
\label{thm:2percolation}
If $G \ne K_4$ is a connected, claw-free, cubic graph of order~$n$ that contains $u(G)$ units, then
\[
m(G,2) = \left\{
\begin{array}{ll}
u(G)+1,  & \mbox{if $G \in \cN_\cub$};  \1 \\
u(G),  & \mbox{otherwise}.
\end{array}
\right.
\]
\end{theorem}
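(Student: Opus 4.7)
The plan is as follows. Since the case $G\in\cN_\cub$ is settled by Proposition~\ref{prop:2percolation-diamond-neck}, I focus on $G\notin\cN_\cub$ and aim to show $m(G,2)=u(G)$. The lower bound $m(G,2)\ge u(G)$ is immediate from Lemma~\ref{lem:subgraph condition} with $r=2$ applied separately to each unit: every vertex of a triangle-unit or diamond-unit has at most one neighbor outside the unit, which is strictly less than $r=2$, so every $2$-percolating set of $G$ meets every unit in at least one vertex.

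For the upper bound I would construct an explicit $2$-percolating set $P$ of size exactly $u(G)$ by choosing one vertex from each unit. Invoke Lemma~\ref{lem:special triangle unit} to fix a triangle-unit $T'=\{x,y,z\}$ such that $G-T'$ has at most two components, and let $x',y',z'$ denote the outside neighbors of $x,y,z$ in units $U_x,U_y,U_z$ (not necessarily distinct). After possibly relabeling $\{x,y,z\}$, I may assume that the component $C_1$ of $G-T'$ containing $U_x$ also contains at least one of $U_y,U_z$, which is possible by pigeonhole on the three outside edges of $T'$ distributed over at most two components. Choose a spanning tree $\mathcal{T}$ of the unit graph (whose vertices are the units of $G$ and whose edges record unit-adjacency) rooted at $T'$, with the additional property that any of $U_y,U_z$ lying in $C_1$ has its $\mathcal{T}$-parent inside $C_1$ rather than being~$T'$. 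Set $s_{T'}:=x$; for every other unit $U$ with $\mathcal{T}$-parent $W$ and $u\in U$ the endpoint in $U$ of the parent edge, set $s_U$ to be a vertex of $U$ adjacent inside $U$ to $u$ (a dominating vertex when $U$ is a diamond, any vertex of $U\setminus\{u\}$ when $U$ is a triangle). In particular, $s_{U_x}$ is adjacent inside $U_x$ to $x'$.

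The verification has three parts. First, the starting trigger fires: $x'$ is a common neighbor of $s_{T'}=x$ (via the edge $xx'$) and of $s_{U_x}$ (inside $U_x$, by construction), so $x'$ has two infected neighbors at time~$0$ and becomes infected at time~$1$; a direct check of the triangle and diamond cases then shows $U_x$ becomes fully infected. Second, Observation~\ref{obs:adjacent units} propagates infection along $\mathcal{T}$ inside $C_1$: inductively, once a unit is fully infected, its $\mathcal{T}$-children in $C_1$ become fully infected since by construction their picks lie at distance~$2$ in $G$ from the entry vertex in the parent. Hence every unit of $C_1$ becomes fully infected, including (by the relabeling) at least one of $U_y,U_z$, say $U_y$; then $y'$ is infected, $y$ has the two infected neighbors $x,y'$ and is infected, and then $z$ has the two infected neighbors $x,y$ and is infected, so $T'$ is fully infected. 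Third, if $G-T'$ has a second component $C_2$ (containing $U_z$), the now-infected $z$ is an infected outside neighbor of $z'\in U_z$, and since $s_{U_z}$ was chosen at distance~$2$ from $z$, Observation~\ref{obs:adjacent units} infects $U_z$; $\mathcal{T}$-propagation inside $C_2$ from $U_z$ then infects all remaining units of $C_2$.

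The main obstacle is constructing the spanning tree $\mathcal{T}$ so that its parent assignments lead to picks simultaneously satisfying the distance-$2$ condition demanded by Observation~\ref{obs:adjacent units} along the entire infection chain, and in particular so that the unit $U_y$ (or $U_z$) lying in $C_1$ can inherit its $\mathcal{T}$-parent from inside $C_1$ rather than from $T'$. Several sub-cases must be checked, determined by whether $U_x,U_y,U_z$ are pairwise distinct or share (double- or triple-bond configurations) and by whether $G-T'$ has one or two components. In each sub-case, Lemma~\ref{lem:special triangle unit}'s hypothesis of at most two components---which precisely fails for diamond-necklaces, where removing any would-be special triangle leaves too many isolated units---forces at least two of $U_x,U_y,U_z$ to share a component, and this is the exact structural feature that allows the construction to succeed without an extra pick.
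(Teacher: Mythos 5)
Your proposal is correct and follows essentially the same route as the paper: the same lower bound via Lemma~\ref{lem:subgraph condition}, the same special triangle-unit supplied by Lemma~\ref{lem:special triangle unit}, and the same one-vertex-per-unit picks at distance two from the entry vertex, propagated unit by unit via Observation~\ref{obs:adjacent units}, with the root triangle completed because two of its outside neighbors land in the same component of $G-T'$ and the second component is then reached across the bridge. Your spanning-tree packaging of the unit graph is only a cosmetic reorganization of the paper's iterative Step~$1$, Step~$2$, \dots, Step~$i+1$ construction.
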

\proof
By Lemma~\ref{lem:subgraph condition}, we infer that every $2$-percolating set of $G$ contains at least one vertex from every triangle-unit and every diamond-unit of $G$. Moreover, if $D$ is a diamond-unit of $G$ and a $2$-percolating set contains exactly one vertex from~$D$, then by Lemma~\ref{lem:subgraph condition} we infer that such a vertex is a dominating vertex of $D$. In particular, since every $2$-percolating set of $G$ contains at least one vertex from every unit in the $\Delta$-D-partition of $G$, we note that $m(G,2) \ge u(G)$. If $G \in \cN_\cub$, then by Proposition~\ref{prop:2percolation-diamond-neck}, $m(G,2) = u(G)+1$. Hence, we may assume that $G \notin \cN_\cub$, for otherwise the desired result follows.

Since every triangle-unit contributes~$3$ to the order of the graph and every diamond-unit contributes~$4$ to the order of the graph, we observe that if $G$ has order~$n$ with $u_t$ triangle-units and $u_d$ diamond-units, then $u(G) = u_t + u_d$ and $n = 3u_t + 4u_d$. By assumption, $u_t \ge 1$. Thus, since $n$ is even, $u_t \ge 2$, that is, $G$ contains at least two triangle-units. According to Lemma~\ref{lem:special triangle unit}, there exists a triangle-unit $T_1$ of $G$, where $V(T_1) = \{t_1,t_2,t_3\}$, such that $G-T_1$ consists of at most two components. This implies that at most one of the edges incident with exactly one vertex of $T_1$ is a bridge. Since $G$ is connected, the triangle-unit $T_1$ is adjacent to at least one other unit. Let $U_1$ be a unit that is adjacent to~$T_1$ such that the edge between $U_1$ and $T_1$ is not a bridge, or $T_1$ and $U_1$ are double-bonded. Finally denote as $G_1$ the component of $G-T_1$ containing $U_1$.

As observed earlier, $m(G,2) \ge u(G)$.  Hence it suffices for us to show that $m(G,2) \le u(G)$.  For this purpose, we construct a $2$-percolating set $S$ of $G$ that contains exactly one vertex from each unit of $G$. Initially, we let $S = \emptyset$. We consider three cases. First, suppose that the units $T_1$ and $U_1$ are not double-bonded.

\smallskip
\emph{Case~1. The unit $U_1$ is a triangle-unit.} Let $V(U_1) = \{a_1,b_1,c_1\}$ and where $t_1c_1$ is an edge. In this case, we add to $S$ the vertices $t_1$ and $a_1$, and so $S = \{t_1,a_1\}$. The vertices in $S$ are indicated by the shaded vertices in Figure~\ref{fig:adj-units}(a). Due to Observation~\ref{obs:adjacent units}, every vertex in the triangle-unit $U_1$ becomes infected.

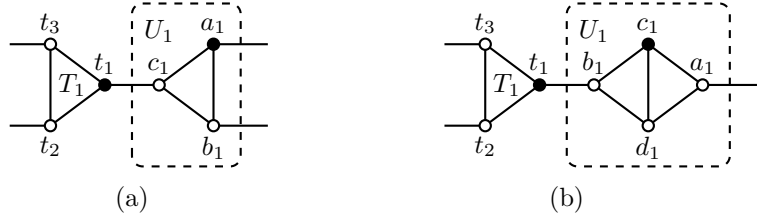
\begin{figure}[htb]
\begin{center}
\begin{tikzpicture}[scale=.85,style=thick,x=.85cm,y=.85cm]
\def\vr{2.5pt} 
\path (0.25,0) coordinate (b);
\path (0.25,1.5) coordinate (c);
\path (1,0) coordinate (d);
\path (1,1.5) coordinate (e);
\path (2,0.75) coordinate (f);
\path (3,0.75) coordinate (g);
\path (4,0) coordinate (h);
\path (4,1.5) coordinate (i);
\path (5,0) coordinate (j);
\path (5,1.5) coordinate (k);
\draw (b)--(d)--(e)--(f)--(d);
\draw (c)--(e);
\draw (f)--(g);
\draw (i)--(g)--(h)--(i);
\draw (h)--(j);
\draw (i)--(k);
\draw (d) [fill=white] circle (\vr);
\draw (e) [fill=white] circle (\vr);
\draw (f) [fill=black] circle (\vr);
\draw (g) [fill=white] circle (\vr);
\draw (h) [fill=white] circle (\vr);
\draw (i) [fill=black] circle (\vr);
\draw[anchor = north] (d) node {{\small $t_2$}};
\draw[anchor = south] (e) node {{\small $t_3$}};
\draw[anchor = south] (f) node {{\small $t_1$}};
\draw[anchor = north] (h) node {{\small $b_1$}};
\draw[anchor = south] (i) node {{\small $a_1$}};
\draw[anchor = south] (g) node {{\small $c_1$}};
\draw (1.4,0.75) node {{\small $T_1$}};
\draw (2.5,-1.35) node {{\small (a)}};
\draw [style=dashed,rounded corners] (2.5,-0.75) rectangle (4.5,2.25);
\draw (3,1.75) node {{\small $U_1$}};

\begin{scope}[shift={(-1,0)}]

    \path (7.25,0) coordinate (b);
    \path (7.25,1.5) coordinate (c);
    \path (8,0) coordinate (d);
    \path (8,1.5) coordinate (e);
    \path (9,0.75) coordinate (f);
    \path (10,0.75) coordinate (g);
    \path (11,0) coordinate (h);
    \path (11,1.5) coordinate (i);
    \path (12,0.75) coordinate (j);
    \path (13,0.75) coordinate (k);
    \draw (b)--(d)--(e)--(f)--(d);
    \draw (c)--(e);
    \draw (f)--(g);
    \draw (i)--(g)--(h)--(i);
    \draw (h)--(j)--(i);
    \draw (j)--(k);
    \draw (d) [fill=white] circle (\vr);
    \draw (e) [fill=white] circle (\vr);
    \draw (f) [fill=black] circle (\vr);
    \draw (g) [fill=white] circle (\vr);
    \draw (h) [fill=white] circle (\vr);
    \draw (i) [fill=black] circle (\vr);
    \draw (j) [fill=white] circle (\vr);
    \draw[anchor = north] (d) node {{\small $t_2$}};
    \draw[anchor = south] (e) node {{\small $t_3$}};
    \draw[anchor = south] (f) node {{\small $t_1$}};
    \draw[anchor = north] (h) node {{\small $d_1$}};
    \draw[anchor = south] (i) node {{\small $c_1$}};
    \draw[anchor = south] (g) node {{\small $b_1$}};
    \draw[anchor = south] (j) node {{\small $a_1$}};
    \draw (8.4,0.75) node {{\small $T_1$}};
    \draw (9.5,-1.35) node {{\small (b) }};
    \draw [style=dashed,rounded corners] (9.5,-0.75) rectangle (12.5,2.25);
    \draw (10,1.75) node {{\small $U_1$}};    
\end{scope}
\path (13,0.75) coordinate (x);
\path (14,0.75) coordinate (g);
\path (15,0) coordinate (h);
\path (15,1.5) coordinate (i);
\path (16,0) coordinate (j);
\path (16,1.5) coordinate (k);
\path (17,0.75) coordinate (l);
\path (18,0.75) coordinate (m);

\draw (i)--(g)--(h)--(i);
\draw (h)--(j);
\draw (j)--(k)--(i);
\draw (x)--(g);
\draw (j)--(l)--(k);
\draw (l)--(m);

\draw (g) [fill=white] circle (\vr);
\draw (h) [fill=white] circle (\vr);
\draw (i) [fill=black] circle (\vr);
\draw (j) [fill=black] circle (\vr);
\draw (k) [fill=white] circle (\vr);
\draw (l) [fill=white] circle (\vr);

\draw[anchor = north] (h) node {{\small $t_2$}};
\draw[anchor = south] (i) node {{\small $t_1$}};
\draw[anchor = south] (g) node {{\small $t_3$}};
\draw[anchor = north] (j) node {{\small $u$}};

\draw (14.6,0.75) node {{\small $T_1$}};
\draw (15.5,-1.35) node {{\small (c) }};
\draw [style=dashed,rounded corners] (15.5,-0.75) rectangle (17.5,2.25);
\draw (17,1.75) node {{\small $U_1$}};

\end{tikzpicture}
\end{center}
\vskip -0.35cm
\caption{Possible adjacent units in the proof of Theorem~\ref{thm:2percolation}} \label{fig:adj-units}
\end{figure}

\smallskip
\emph{Case~2. The unit $U_1$ is a diamond-unit.} Let $V(U_1) = \{a_1,b_1,c_1,d_1\}$ and where $a_1b_1$ is the missing edge in $U_1$ and where $t_1b_1$ is an edge of $G$. In this case, we add to $S$ the vertices $t_1$ and $c_1$, and so $S = \{t_1,c_1\}$. The vertices in $S$ are indicated by the shaded vertices in Figure~\ref{fig:adj-units}(b). Due to Observation~\ref{obs:adjacent units}, every vertex in the diamond-unit $U_1$ becomes infected.

\emph{Case~3}. The units $T_1$ and $U_1$ are double-bonded. Thus there is a vertex $u \in U_1$ that is joined to a vertex of $T_1$ different from~$t_1$. Renaming the vertices $t_2$ and $t_3$ if necessary, we may assume that $ut_2$ is such an edge between the units $T_1$ and $U_1$. In particular, we note that $t_2 \notin S$ and let $S=\{t_1,u\}$. The vertices in $S$ are indicated by the shaded vertices in Figure~\ref{fig:adj-units}(c). Again, due to Observation~\ref{obs:adjacent units}, every vertex in both units $T_1$ and $U_1$ becomes infected. In this case, the set $S$ consists of two vertices, one vertex from each of the units $U_1$ and $T_1$.

Cases~1,~2 and~3 occur in Step~1 of our procedure to construct the set $S$. Thus after Step~1, the initial set $S$ consists of two vertices, namely a vertex~$t_1$ from the triangle-unit~$T_1$ and one vertex from the unit $U_1$ that is adjacent to $T_1$. Moreover in all three cases, every vertex in the unit $U_1$ becomes infected. We now proceed to Step~2 of our procedure to construct the set $S$.

Let $U_2$ be a unit different from $T_1$ that is adjacent to the infected unit~$U_1$, and let $uv$ be an edge that joins a vertex $u \in U_1$ and a vertex $v \in U_2$. Note that if there is no such unit $U_2$, then $G$ consist only of units $T_1$ and $U_1$ and is now fully infected, yielding the desired result. Since $U_2 \ne T_1$, we note that $v \ne t_1$. In particular, we note that $v \notin S$ since at this stage of the construction the set $S$ only contains the vertex~$t_1$ and one other vertex, namely a vertex from~$U_1$.

Suppose that $U_2$ is a triangle-unit. Let $V(U_2) = \{v,v_1,v_2\}$. We now add to the set $S$ exactly one of the vertices $v_1$ and $v_2$. By symmetry, we may assume that $v_1$ is added to the set $S$. Once again, due to Observation~\ref{obs:adjacent units}, every vertex in the  triangle-unit $U_2$ becomes infected.

Suppose next that $U_2$ is a diamond-unit. Let $V(U_2) = \{v,v_1,v_2,v_3\}$, where $v_1$ and $v_2$ are the dominating vertices of $U_2$ (and so, $vv_3$ is the missing edge in $D_v$). We now add to the set $S$ exactly one of the dominating vertices of $U_2$. By symmetry, we may assume that dominating vertex $v_1$ of $U_2$ is added to~$S$. We again invoke Observation~\ref{obs:adjacent units}, thus every vertex in the diamond-unit $U_2$ becomes infected.

After Step~2 of our procedure to construct the set $S$, every vertex in the unit $U_2$ becomes infected, and the set $S$ contains exactly one vertex from each of the units $T_1, U_1$ and $U_2$. 

Suppose that $i \ge 2$ and after Step~$i$ of our procedure to construct the set $S$, the units $U_1,U_2, \ldots,U_i$ are all infected, and the set $S$ contains exactly one vertex from each of the units $U_1,U_2, \ldots,U_i$. Moreover if $T_1 = U_j$ for some $j \in [i] \setminus \{1\}$, then after Step~$i$ we have $|S| = i$ and the set $S$ contains exactly one vertex from each of the units $U_1,U_2, \ldots,U_i$, while if $T_1 \ne U_j$ for any $j \in [i]$, then $|S| = i+1$ and $S$ contains one vertex from each of the units $U_1,U_2, \ldots,U_i,T_1$.

In Step~$i+1$, let $U_{i+1}$ be a unit different the units $U_1,U_2, \ldots,U_i$ that is adjacent to an infected unit~$U_j$ for some $j \in [i]$, and let $uv$ be an edge that joins a vertex $u \in U_j$ and a vertex $v \in U_{i+1}$.

Suppose that $U_{i+1}$ is a triangle-unit. Let $V(U_{i+1}) = \{v,v_1,v_2\}$. In this case, either $U_{i+1} = T_1$ and renaming vertices of $T_1$ if necessary we may assume that $v_1 = t_1$, or $U_{i+1} \neq T_1$ and we add to the set $S$ the vertex $v_1$. Suppose next that $U_{i+1}$ is a diamond-unit. Let $V(U_{i+1}) = \{v,v_1,v_2,v_3\}$, where $v_1$ and $v_2$ are the dominating vertices of $U_{i+1}$ (and so, $vv_3$ is the missing edge in $D_v$). We now add to the set $S$ the dominating vertex $v_1$ of $U_{i+1}$. In either of these cases the vertices $u$ and $v_1$ satisfy the conditions of Observation~\ref{obs:adjacent units}, therefore the whole unit $U_{i+1}$ becomes infected.

Continuing in the described way, the above process continues until every unit of $G_1$ and also the unit $T_1$ become infected. If $G-T_1$ is connected, then all vertices of $G$ have become infected and $S$ contains exactly one vertex from each unit of $G$, thus the proof is complete. Otherwise, if $G-T_1$ is not connected, then it has at most two components, and let $G_2$ be the component of $G-T_1$ different from $G_1$. Let $U_1'$ be the unit in $G_2$ that is adjacent to $T_1$, and let $v_2a$ be the bridge of $G$ connecting $T_1$ with $U_1'$. Now, adding a dominating vertex $b$ of $U_1'$, different from $a$, we infer that all vertices of $U_1'$ become infected (using the analogous arguments as in the previous cases). Now, we can continue with the same process, continuing by infecting the units $U_i'$ in $G_2$ that are adjacent to an already infected unit by adding to $S$ appropriately selected dominating vertex of $U_i'$. Since $G_2$ is connected, we infer that all vertices of $G_2$ become infected, where we put in $S$ exactly one vertex of each unit. Thus, $S$ is a $2$-percolating set of $G$, implying that $m(G,2) \le u(G)$. As observed earlier, $m(G,2) \ge u(G)$. Consequently, $m(G,2) = u(G)$. This completes the proof of Theorem~\ref{thm:2percolation}.~\QED

\medskip
By definition of a $(p,q)$-spreading set in a graph, if $G$ is cubic graph, then a set is a $(2,3)$-spreading set if and only if it is $2$-percolating set, implying that $\sigma_{(2,3)}(G) = m(G,2)$. As an immediate consequence of Theorem~\ref{thm:2percolation}, we infer the following result.

\begin{corollary}
\label{cor:2percolation}
If $G \ne K_4$ is a connected, claw-free, cubic graph of order~$n$ that contains $u(G)$ units, then
\[
\sigma_{(2,3)}(G) = \left\{
\begin{array}{ll}
u(G)+1,  & \mbox{if $G \in \cN_\cub$};  \1 \\
u(G),  & \mbox{otherwise}.
\end{array}
\right.
\]
\end{corollary}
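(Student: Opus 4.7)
The plan is to show that in any cubic graph the $(2,3)$-spreading rule coincides with the $2$-percolation rule, and then simply quote Theorem~\ref{thm:2percolation}. This is essentially already noted in the paragraph preceding the corollary, so the write-up is very short.

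First I would verify the equivalence. The $(2,3)$-spreading rule infects a white vertex $w$ when (i) $w$ has at least $2$ blue neighbors, and (ii) at least one of those blue neighbors has at most $3$ white neighbors. In a cubic graph every vertex has exactly $3$ neighbors in total, so (ii) is satisfied automatically: a blue neighbor of $w$ cannot have more than $3$ white neighbors. Hence condition (ii) is vacuous and the rule collapses to the $2$-percolation rule (``$w$ is infected whenever it has at least $2$ blue neighbors''). Therefore an initial set $S \subseteq V(G)$ is a $(2,3)$-spreading set of $G$ if and only if it is a $2$-percolating set of $G$, which gives $\sigma_{(2,3)}(G) = m(G,2)$.

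Then I would apply Theorem~\ref{thm:2percolation} directly: for $G \ne K_4$ a connected, claw-free, cubic graph of order $n$, the value $m(G,2)$ equals $u(G)+1$ if $G \in \cN_\cub$ and $u(G)$ otherwise. Substituting $\sigma_{(2,3)}(G) = m(G,2)$ yields the claimed formula. There is no real obstacle; the only thing to be careful about is correctly pointing out that the second condition of the spreading rule is vacuous precisely because $\Delta(G) = 3$, which is why the corollary is an \emph{immediate} consequence of Theorem~\ref{thm:2percolation} rather than requiring a separate argument.
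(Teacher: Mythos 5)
Your proposal is correct and matches the paper's own reasoning exactly: the paper likewise observes that in a cubic graph a $(2,3)$-spreading set coincides with a $2$-percolating set (since the second condition of the rule is automatic when every vertex has only $3$ neighbors), so that $\sigma_{(2,3)}(G)=m(G,2)$, and then invokes Theorem~\ref{thm:2percolation}. Your write-up merely makes the vacuity of condition (ii) explicit, which is a fine level of detail.
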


We are now in a position to establish the following relationships between the $(2,2)$-spreading number and the $(2,3)$-spreading number of a connected, claw-free, cubic graph.

\begin{proposition}
\label{prop1:22-spread-23spread}
If $G = K_4$ or if $G \in \cN_\cub$, then $\sigma_{(2,2)}(G) = \sigma_{(2,3)}(G)$.
\end{proposition}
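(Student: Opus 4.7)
The plan is to prove both inequalities. The direction $\sigma_{(2,2)}(G) \ge \sigma_{(2,3)}(G)$ is immediate from Observation~\ref{obs:basic}, since any $(2,2)$-spreading set is automatically a $(2,3)$-spreading set. Hence the real work is to exhibit a $(2,2)$-spreading set of size $\sigma_{(2,3)}(G)$ in each of the two cases.

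For $G = K_4$, we have $\sigma_{(2,3)}(K_4) = m(K_4,2) = 2$. I would just verify directly that any two vertices form a $(2,2)$-spreading set: the two remaining white vertices each see two blue neighbors, and each blue vertex has exactly two white neighbors, so both white vertices are recolored by the $(2,2)$-rule in a single step.

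For $G = N_k \in \cN_\cub$ with $k \ge 2$, Proposition~\ref{prop:2percolation-diamond-neck} gives $\sigma_{(2,3)}(N_k) = k+1$. I propose to take the same initial set used there, namely $S = \{c_1,d_1\} \cup \{c_i : 2 \le i \le k\}$, which consists of both dominating vertices of $D_1$ together with one dominating vertex from each of the remaining diamonds, and then verify that the $(2,2)$-infection sweeps the necklace. The base case is $D_1$: both $a_1$ and $b_1$ have the two blue neighbors $c_1, d_1$, and each of $c_1, d_1$ has exactly two white neighbors (namely $a_1, b_1$), so $a_1, b_1$ are recolored, making $D_1$ fully blue. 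For the inductive step, assuming $D_{i-1}$ is fully blue, the key observation is that $a_{i-1}$ has neighbors $c_{i-1}, d_{i-1}, b_i$, so once $D_{i-1}$ is infected $a_{i-1}$ has exactly one white neighbor $b_i$; then $b_i$ (with blue neighbors $c_i, a_{i-1}$ and witness $a_{i-1}$) gets infected, then $d_i$ (with blue neighbors $c_i, b_i$ and witness $c_i$ now having only two whites $a_i, d_i$), then $a_i$ (with blue neighbors $c_i, d_i$ and witness $c_i$ having only one white), completing $D_i$. The wraparound at $D_k$ is not an issue because $D_1$ is infected first, so the propagation from $D_{k-1}$ to $D_k$ goes through exactly as in the generic step.

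The main obstacle is essentially bookkeeping: one must verify at each single-vertex infection that a blue neighbor serving as the witness truly has at most two white neighbors at that moment. The structural feature that makes this work is that the ``connecting'' vertices $a_i, b_i$ of each diamond have only two intra-diamond neighbors (since $a_ib_i$ is the missing edge), so after a diamond is fully infected, each of $a_i, b_i$ has at most one remaining white neighbor — which is exactly the slack needed by the stricter $(2,2)$-rule to propagate into the next diamond. Combining this construction with the trivial lower bound completes the proof.
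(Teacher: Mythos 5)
Your proposal is correct and follows the same approach as the paper: the paper also notes that the $(2,3)$-spreading set from Proposition~\ref{prop:2percolation-diamond-neck} (two dominating vertices in one diamond, one in each of the others) is already a $(2,2)$-spreading set, and combines this with Observation~\ref{obs:basic} for the reverse inequality. You merely spell out the propagation verification that the paper leaves implicit.
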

\proof If $G = K_4$, then $\sigma_{(2,2)}(G) = \sigma_{(2,3)}(G) = 2$. Hence we may assume that $G \ne K_4$.
Suppose that $G \in \cN_\cub$. By Corollary~\ref{cor:2percolation}, we have $\sigma_{(2,3)}(G) = u(G) + 1$. Moreover, as shown in the proof of Proposition~\ref{prop:2percolation-diamond-neck}, if $G = N_k$ for some $k \ge 2$, then we can choose the $(2,3)$-spreading set $S$ of $G$ to consists of a dominating vertex from $k-1$ diamond-units and two dominating vertices from the remaining diamond-unit in $G$ (as illustrated in Figure~\ref{fig:N4b} in the case when $k = 4$). However such a set $S$ is also a $(2,2)$-spreading set $S$ of $G$, and so $\sigma_{(2,2)}(G) \le |S| = \sigma_{(2,3)}(G)$. By Observation~\ref{obs:basic}, $\sigma_{(2,3)}(G) \le \sigma_{(2,2)}(G)$. Consequently, if $G \in \cN_\cub$, then $\sigma_{(2,2)}(G) = \sigma_{(2,3)}(G)$.~\QED

\begin{proposition}
\label{prop2:22-spread-23spread}
If $G$ is a connected, claw-free, cubic graph, then $\sigma_{(2,2)}(G) \le \sigma_{(2,3)}(G) + 1$.
\end{proposition}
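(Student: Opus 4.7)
If $G = K_4$ or $G \in \cN_\cub$, Proposition~\ref{prop1:22-spread-23spread} already gives equality $\sigma_{(2,2)}(G) = \sigma_{(2,3)}(G)$, so the bound is immediate; the plan is to focus on the remaining case $G \ne K_4$ and $G \notin \cN_\cub$. In this case Corollary~\ref{cor:2percolation} says $\sigma_{(2,3)}(G) = u(G)$, so the goal reduces to exhibiting a $(2,2)$-spreading set of size at most $u(G)+1$.

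I would start from the $(2,3)$-spreading set $S$ of size $u(G)$ explicitly constructed in the proof of Theorem~\ref{thm:2percolation}, and augment it by a single vertex, namely a neighbour $t_2$ of $t_1$ inside the starting triangle-unit $T_1$. Setting $S' = S \cup \{t_2\}$ gives $|S'| = u(G)+1$, and the plan is to show that $S'$ is a $(2,2)$-spreading set by re-running the infection sequence of Theorem~\ref{thm:2percolation}, this time verifying that every infection also satisfies the stronger $(2,2)$-rule.

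The observation I would use throughout is purely local to cubic graphs: once a whole unit $U$ of the $\Delta$-D-partition is blue, every vertex of $U$ already has a blue neighbour inside $U$ and therefore has at most two white neighbours, so a fully blue unit can always play the role of the ``witness'' that the $(2,2)$-rule demands. The only reason $t_2$ is added is to create an adjacent blue pair $\{t_1,t_2\}$ inside $T_1$: this lets the $(2,2)$-rule fire on $t_3$ with $t_1$ as witness, after which $T_1$ becomes the first fully blue unit.

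From there I would proceed by induction on the units, mirroring the construction in Theorem~\ref{thm:2percolation}. Assuming $T_1, U_1, \ldots, U_i$ are already fully blue and that $v_{i+1} \in S \cap U_{i+1}$ is the vertex chosen by that construction (in particular, a dominating vertex whenever $U_{i+1}$ is a diamond), the vertex $v \in U_{i+1}$ lying on the edge joining $U_{i+1}$ to an already fully blue unit has two blue neighbours, one of which sits in that fully blue unit and therefore has a blue neighbour of its own; hence the $(2,2)$-rule fires on $v$, and the remaining vertices of $U_{i+1}$ then fall in turn, each with a freshly infected neighbour inside $U_{i+1}$ supplying the witness. The delicate point will be to check that in each case of Theorem~\ref{thm:2percolation}'s construction, the chosen $v_{i+1}$ is indeed adjacent to the ``right'' vertex $v$; this is exactly how that construction was set up, in particular through the choice of a dominating vertex in every diamond-unit. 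Iterating across all units shows that $S'$ infects every vertex of $G$, giving $\sigma_{(2,2)}(G) \le u(G)+1 = \sigma_{(2,3)}(G)+1$.
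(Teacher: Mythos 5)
Your proposal is correct and follows essentially the same route as the paper: reduce to the case $G \ne K_4$, $G \notin \cN_\cub$ via Proposition~\ref{prop1:22-spread-23spread}, take the $u(G)$-vertex set constructed in the proof of Theorem~\ref{thm:2percolation}, add a single extra vertex, and re-run that infection sequence verifying the stronger $(2,2)$-witness condition at every step (using that any vertex of a fully infected unit has at most one white neighbour). The only difference is the choice of the added vertex — the paper adds the common neighbour of the two initial vertices inside $U_1$ (the vertex $c_1$, resp.\ $b_1$, of Figure~\ref{fig:adj-units}), whereas you add a second vertex of $T_1$ — and both choices make the first step of the process fire under the $(2,2)$-rule.
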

\proof
Let $G$ be a connected, claw-free, cubic graph. If $G = K_4$ or if $G \in \cN_\cub$, then by Proposition~\ref{prop1:22-spread-23spread} we have $\sigma_{(2,2)}(G) = \sigma_{(2,3)}(G)$. Hence we may assume that $G \ne K_4$ and $G \notin \cN_\cub$. To prove that $\sigma_{(2,2)}(G) \le \sigma_{(2,3)}(G)+1$, let $S$ be a $(2,3)$-spreading set of $G$ as constructed in the proof of Theorem~\ref{thm:2percolation}. Adopting the notation from the proof of Theorem~\ref{thm:2percolation}, in Step~1 of our procedure to construct the set $S$ we add one additional vertex to the set $S$ as follows. If the unit $U_1$ is a triangle-unit (see Figure~\ref{fig:adj-units}(a)), then we add the common neighbor of $t_1$ and $a_1$, namely the vertex~$c_1$, to the set $S$. If the unit $U_1$ is a diamond-unit (see Figure~\ref{fig:adj-units}(b)), then we add the common neighbor of $t_1$ and $c_1$, namely the vertex~$b_1$, to the set $S$. Let $S^*$ denote the resulting set upon completion of the construction of the set $S$, and so $|S^*| = |S| + 1 = \sigma_{(2,3)}(G) + 1$. With the addition of the vertex $c_1$ in Case~1 or the vertex~$b_1$ in Case~2, the set $S^*$ is a $(2,2)$-spreading set $S$ of $G$, implying that $\sigma_{(2,2)}(G) \le |S^*| = |S| + 1 = \sigma_{(2,3)}(G) + 1$.~\QED

\medskip
Every $(2,3)$-spreading set of $G$ is by definition also a $(2,2)$-spreading set of $G$, and so $\sigma_{(2,3)}(G) \le \sigma_{(2,2)}(G)$. Hence as a consequence of Proposition~\ref{prop2:22-spread-23spread} and applying also Corollary~\ref{cor:2percolation}, we have the following result.

\begin{corollary}
\label{cor:sigma22-23}
If $G$ is a connected, claw-free, cubic graph, then
\[
\sigma_{(2,3)}(G) \le \sigma_{(2,2)}(G) \le \sigma_{(2,3)}(G)+1.
\]
In addition, $u(G) \le \sigma_{(2,2)}(G) \le u(G)+1$.
\end{corollary}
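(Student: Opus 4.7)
The plan is to assemble both assertions directly from results already established in the excerpt. The left inequality $\sigma_{(2,3)}(G) \le \sigma_{(2,2)}(G)$ is immediate from Observation~\ref{obs:basic}, since any set that percolates under the stricter $(2,2)$-rule also percolates under the $(2,3)$-rule. The right inequality $\sigma_{(2,2)}(G) \le \sigma_{(2,3)}(G)+1$ is the content of Proposition~\ref{prop2:22-spread-23spread}, whose construction explicitly produces a $(2,2)$-spreading set from a $(2,3)$-spreading set by adding a single, carefully chosen extra vertex in the first step (the common neighbor $c_1$ or $b_1$ of the initial pair). Chaining the two inequalities yields the displayed sandwich.

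For the second assertion, I would pass through $\sigma_{(2,3)}(G)$ via Corollary~\ref{cor:2percolation}, which pins down $\sigma_{(2,3)}(G)$ as either $u(G)$ or $u(G)+1$ (the latter exactly when $G \in \cN_\cub$), assuming $G \ne K_4$ so that $u(G)$ is defined by the unique $\Delta$-D-partition from Lemma~\ref{lem:known}. Combining $u(G) \le \sigma_{(2,3)}(G)$ with the previously obtained $\sigma_{(2,3)}(G) \le \sigma_{(2,2)}(G)$ gives the lower bound $u(G) \le \sigma_{(2,2)}(G)$ at once.

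For the upper bound $\sigma_{(2,2)}(G) \le u(G)+1$, I would split according to whether $G$ is a diamond-necklace. If $G \notin \cN_\cub$, then Corollary~\ref{cor:2percolation} gives $\sigma_{(2,3)}(G) = u(G)$, and the sandwich just proved yields $\sigma_{(2,2)}(G) \le \sigma_{(2,3)}(G)+1 = u(G)+1$. If $G \in \cN_\cub$, then Proposition~\ref{prop1:22-spread-23spread} upgrades the sandwich to an equality $\sigma_{(2,2)}(G) = \sigma_{(2,3)}(G)$, and Corollary~\ref{cor:2percolation} gives $\sigma_{(2,2)}(G) = u(G)+1$. In either case the bound holds.

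There is essentially no obstacle; this is a bookkeeping corollary that simply packages Observation~\ref{obs:basic}, Proposition~\ref{prop2:22-spread-23spread} (and, for the tight case, Proposition~\ref{prop1:22-spread-23spread}) together with Corollary~\ref{cor:2percolation}. The only minor point to be careful about is the implicit assumption $G \ne K_4$ in the second assertion, which is needed so that $u(G)$ is well-defined; under this assumption the argument above applies uniformly.
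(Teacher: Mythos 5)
Your proof is correct and follows essentially the same route as the paper: the left inequality from Observation~\ref{obs:basic}, the right one from Proposition~\ref{prop2:22-spread-23spread}, and the $u(G)$ bounds from Corollary~\ref{cor:2percolation} (with Proposition~\ref{prop1:22-spread-23spread} handling the diamond-necklace case, where the generic sandwich alone would only give $\sigma_{(2,2)}(G)\le u(G)+2$). Your explicit case split on $G\in\cN_\cub$ and the remark about $G\ne K_4$ actually spell out details the paper's one-sentence justification leaves implicit.
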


If $G=F_{2k} \in \cT_\cub$ then it is not difficult to see that $S=\{z_i, \, : \, i \in [2k-1] \}\cup\{x_{2k}\}$ is a $(2,2)$-spreading set. Notably, the process starts with the infection of vertex $x_1$, and then continues with $y_1,y_2$ and so on. Therefore $\sigma_{(2,2)}(G)=u(G)$. On the other hand, if $G \in \cN_\cub$, then $\sigma_{(2,2)}(G)=u(G)+1$. In both of these cases it holds that $\sigma_{(2,2)}(G)=\sigma_{(2,3)}(G)$, however this does not hold in general. For instance, see the graph $G$ in Figure~\ref{fig:s22s23} for which we have $\sigma_{(2,3)}=u(G)$, where the shaded vertices indicated in the figure form a $(2,3)$-spreading set of $G$. Yet, one can verify that $\sigma_{(2,2)}=u(G)+1$.

\begin{figure}[htb]
\begin{center}
\begin{tikzpicture}[scale=.85,style=thick,x=.85cm,y=.85cm]
\def\vr{2.5pt} 

%
\path (2.25,0.5) coordinate (a);
\path (2.25,2) coordinate (b);
\path (1.5,1.25) coordinate (c);
\path (3,1.25) coordinate (d);
%

\draw (a) -- (c);
\draw (a) -- (d);
\draw (b) -- (c);
\draw (b) -- (d);
\draw (c) -- (d);

\path (4.25,0.5) coordinate (a1);
\path (4.25,2) coordinate (b1);
\path (3.5,1.25) coordinate (c1);
\path (5,1.25) coordinate (d1);
%

\draw (a1) -- (c1);
\draw (a1) -- (d1);
\draw (b1) -- (c1);
\draw (b1) -- (d1);
\draw (c1) -- (d1);

\path (6.25,0.5) coordinate (a2);
\path (6.25,2) coordinate (b2);
\path (5.5,1.25) coordinate (c2);
\path (7,1.25) coordinate (d2);
%

\draw (a2) -- (c2);
\draw (a2) -- (d2);
\draw (b2) -- (c2);
\draw (b2) -- (d2);
\draw (c2) -- (d2);

\path (3.5,3.75) coordinate (x);
\path (4.25,3) coordinate (y);
\path (5,3.75) coordinate (z);

\draw (x) -- (y)--(z)--(x);

\path (3.5,-1.25) coordinate (x1);
\path (4.25,-0.5) coordinate (y1);
\path (5,-1.25) coordinate (z1);

\draw (x1) -- (y1)--(z1)--(x1);

\draw (x1)--(a);
\draw (y1)--(a1);
\draw (z1)--(a2);

\draw (x)--(b);
\draw (y)--(b1);
\draw (z)--(b2);

\draw (a) [fill=white] circle (\vr);
\draw (b) [fill=white] circle (\vr);
\draw (c) [fill=black] circle (\vr);
\draw (d) [fill=white] circle (\vr);
\draw (a1) [fill=white] circle (\vr);
\draw (b1) [fill=white] circle (\vr);
\draw (c1) [fill=black] circle (\vr);
\draw (d1) [fill=white] circle (\vr);
\draw (a2) [fill=white] circle (\vr);
\draw (b2) [fill=white] circle (\vr);
\draw (c2) [fill=black] circle (\vr);
\draw (d2) [fill=white] circle (\vr);
\draw (x) [fill=white] circle (\vr);
\draw (y) [fill=white] circle (\vr);
\draw (z) [fill=black] circle (\vr);
\draw (x1) [fill=white] circle (\vr);
\draw (y1) [fill=black] circle (\vr);
\draw (z1) [fill=white] circle (\vr);

%
\end{tikzpicture}
\end{center}
\vskip -0.35cm
\caption{A graph $G$ with $\sigma{(2,3)}=u(G)$ and $\sigma{(2,2)}=u(G)+1$} \label{fig:s22s23}
\end{figure}
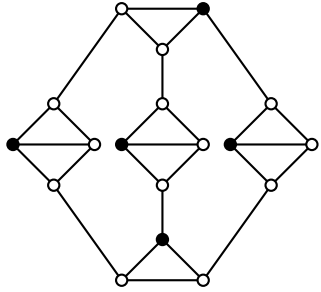

Finally, we consider $(2,1)$-spreading, and bound the corresponding invariant in claw-free cubic graphs. Again we see that it can achieve only two possible values.

\begin{theorem}
\label{thm:s21}
If $G \ne K_4$ is a connected, claw-free, cubic graph of order~$n$ that contains $u(G)$ units, then $u(G) +1 \le \sigma_{(2,1)}(G) \le u(G) + 2$.
\end{theorem}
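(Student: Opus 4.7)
The plan is to handle the lower bound and the upper bound separately.

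For the lower bound $\sigma_{(2,1)}(G)\ge u(G)+1$, I would start from Observation~\ref{obs:basic}, which says every $(2,1)$-spreading set is also a $(2,3)$-spreading set, and combine this with Corollary~\ref{cor:2percolation} to get $\sigma_{(2,1)}(G)\ge u(G)$, already with strict inequality when $G\in\cN_\cub$. To exclude the remaining case $\sigma_{(2,1)}(G)=u(G)$, I assume for contradiction that $S$ is a $(2,1)$-spreading set of size $u(G)$. Being then a minimum $(2,3)$-spreading set, the reasoning in the proof of Theorem~\ref{thm:2percolation} (via Lemma~\ref{lem:subgraph condition}) forces $S$ to contain exactly one vertex from every unit of the $\Delta$-D-partition, and a dominating vertex from every diamond-unit. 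A direct inspection of the initial coloring then shows that every $s\in S$ has at least two white neighbors: the two other triangle-vertices are white if $s$ lies in a triangle-unit, and all three diamond-neighbors of $s$ are white if $s$ is a dominating vertex of a diamond-unit. Since no vertex can play the role of a $(2,1)$-driver, the spreading process cannot begin, contradicting that $S$ is $(2,1)$-spreading.

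For the upper bound I would exhibit a concrete $(2,1)$-spreading set of size $u(G)+2$ obtained by making one whole unit blue at the start and then adding a single vertex in each remaining unit. Concretely, if $G\notin\cN_\cub$ pick any triangle-unit $U_0$ and place its three vertices in $S$; if $G\in\cN_\cub$ take $U_0$ to be any diamond-unit and place in $S$ both of its dominating vertices together with one non-dominating vertex. In either case a direct check shows $U_0$ becomes fully blue after at most one application of the $(2,1)$-rule. For every remaining unit I would apply the iterative BFS-type procedure from the proof of Theorem~\ref{thm:2percolation}: process units in order of distance from $U_0$, and when a new unit $U$ is reached via an entry vertex $v\in U$, add to $S$ a vertex of $U$ adjacent to $v$ (a dominating vertex if $U$ is a diamond-unit, the appropriate triangle vertex otherwise). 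The total size is $|S|=3+(u(G)-1)=u(G)+2$.

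To prove that this $S$ is $(2,1)$-spreading, I would argue by induction on the BFS step. The key reusable observation is that once a unit $U'$ is fully blue, any $u\in U'$ with a white out-neighbor $v$ has exactly one white neighbor, namely $v$; combined with the fact that, by construction, $v$ has a second blue neighbor inside its own unit, this makes $u$ a legal $(2,1)$-driver for $v$. A short case distinction inside the newly entered unit, analogous to the use of Observation~\ref{obs:adjacent units} in the proof of Theorem~\ref{thm:2percolation} and covering also double-bonded adjacencies, then fills up the rest of that unit under the $(2,1)$-rule. The main obstacle will be the initialization in the diamond-necklace case $G\in\cN_\cub$: here I have to verify that placing both dominating vertices and one non-dominating vertex of $U_0$ in $S$ makes one dominating vertex have exactly one white neighbor (the remaining non-dominating vertex), so that the last vertex of $U_0$ is $(2,1)$-infected immediately, and that afterwards the two non-dominating vertices of $U_0$ each have a unique white out-neighbor in an adjacent diamond whose dominating vertex is in $S$, allowing the propagation to continue around the necklace in the spirit of Propositions~\ref{prop:2percolation-diamond-neck} and~\ref{prop1:22-spread-23spread}.
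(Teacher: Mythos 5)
Your proposal is correct and takes essentially the same approach as the paper: the lower bound rests on Lemma~\ref{lem:subgraph condition} forcing at least one vertex per unit together with the observation that a set meeting every unit exactly once leaves every blue vertex with at least two white neighbors (so the $(2,1)$-process cannot start), and the upper bound seeds three vertices forming a triangle in one unit and adds one vertex per remaining unit, propagating unit by unit as in Theorem~\ref{thm:2percolation}. Your case split between $G\in\cN_\cub$ and $G\notin\cN_\cub$ for the seed unit is harmless but unnecessary, since the paper simply starts from an arbitrary triangle, which exists inside every unit.
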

\proof 
First note that due to Lemma~\ref{lem:subgraph condition}  a $(2,1)$-spreading set $S$ needs to contain at least one vertex in each unit of $G \ne K_4$. Now, if $S$ is a set of vertices that contains exactly one vertex from each unit of $G$, then every vertex $v \in S$ has at least two neighbors in $V(G) \setminus S$, and therefore such a set $S$ cannot be a $(2,1)$-spreading set. Hence, $u(G) +1 \le \sigma_{(2,1)}(G)$.

We show next that $\sigma_{(2,1)}(G) \le u(G) + 2$ by constructing a $(2,1)$-spreading set $S$ of $G$ satisfying $|S| \le u(G) + 2$. Let $T$ be an arbitrary triangle in $G$ (that may also be part of a diamond-unit).  Initially, we let $S = V(T)$, and so the vertices in $T$ form the set of initially infected vertices. We now enlarge the set $S$ as follows. Suppose that $T$ belongs to a diamond-unit $D$. In this case, we let $v$ be the vertex in the diamond-unit $D$ that is not in $T$. The vertex~$v$ immediately becomes infected in the $(2,1)$-spreading process, since both of its neighbors in $T$ have no other uninfected neighbor. As a result, all vertices in the diamond-unit $D$ become infected, that is, $D$ becomes an infected unit.
Suppose next that $T$ is a triangle-unit of $G$. Then $T$ is already an infected unit.

Finally let $T'$ be the infected unit containing $T$ (that is, $T'=D$ or $T'=T$). We now extend the set $S$ in the same manner as in the proof of Theorem~\ref{thm:2percolation}. Let $U$ be a unit adjacent to~$T'$. If $U$ is a triangle-unit, then we add to $S$ a vertex in $U$ that is not adjacent to any vertex in the unit $T'$. If $U$ is a diamond-unit, then we add to $S$ a vertex in $U$ that is a dominating vertex of the unit $U$. Proceeding analogously as in the proof of Theorem~\ref{thm:2percolation}, this results in all vertices of the unit $U$ becoming infected in the $(2,1)$-spreading process.

Continuing in this way by considering a vertex not yet infected that is adjacent to an infected triangle, we obtain a $(2,1)$-spreading set $S$ of $G$ starting with the set $V(T)$ and adding exactly one vertex from every unit of $G$ that does not contain the triangle $T$. Thus,  $\sigma_{(2,1)}(G) \le |S| = u(G) + 2$.~\QED

\section{Concluding remarks}

We end this paper with some remarks concerning the computational complexity of determining $\sigma_{(p,q)}(G)$ in claw-free cubic graphs $G$. We have already mentioned that it is unclear whether one can determine the zero forcing number of a claw-free cubic graph efficiently. For some of the values of $p$ and $q$, where $p>1$ or $q>1$, there exists a polynomial-time algorithm to determine $\sigma_{(p,q)}(G)$.  First, determining the independence number in claw-free graphs can be done in polynomial time (see~\cite{min-1980}, where a polynomial-time algorithm is presented even for the weighted version of the problem). Therefore the problem of determining $\sigma_{(3,q)}(G)$ is polynomial in claw-free cubic graphs $G$ for all $q\ge 2$. Similarly, one can efficiently determine the number of units in a claw-free cubic graph. Thus the problem of determining $\sigma_{(2,q)}(G)$ is also polynomial in claw-free cubic graphs $G$ for any $q\ge 3$. 

The following problems remain open:

\begin{problem}
Provide a structural characterization of the connected claw-free cubic graphs $G$  for which $\sigma_{(2,2)}=u(G)$ (resp.,~$u(G)+1$). Is there a polynomial-time algorithm to recognize the corresponding classes of connected claw-free cubic graphs?
\end{problem}

\begin{problem}
Provide a structural characterization of the connected claw-free cubic graphs $G$  for which $\sigma_{(2,1)}=u(G)+1$ (resp.,~$u(G)+2$). Is there a polynomial-time algorithm to recognize the corresponding classes of connected claw-free cubic graphs?
\end{problem}

\begin{problem}
Provide a structural characterization of the connected claw-free cubic graphs $G$  for which $\sigma_{(3,1)}=\beta(G)$ (resp.,~$\beta(G)+1$). Is there a polynomial-time algorithm to recognize the corresponding classes of connected claw-free cubic graphs?
\end{problem}

\section*{Acknowledgements}

We are grateful to anonymous reviewer for meticulous reading of the first version of the manuscript and for numerous useful suggestions. 
Bo\v{s}tjan Bre\v{s}ar was supported by the Slovenian Research Agency (ARRS) under the grants P1-0297, N1-0285, and J1-4008. Jaka Hed\v zet was supported by the Slovenian Research Agency (ARRS) under the grant P1-0297. Michael A. Henning was supported in part by the South African National Research Foundation (grants 132588, 129265) and the University of Johannesburg.





\end{document}